\newcommand{\ol}[1]{\overline{#1}}
\newcommand{\mc}[1]{\mathcal{#1}}
\newcommand{\ints}{\mathbb{Z}}
\newcommand{\proj}{\mathbb{P}}
\newtheorem{theorem}{Theorem}[section]
\newtheorem{proposition}[theorem]{Proposition}
\newtheorem{lemma}[theorem]{Lemma}
\newtheorem{corollary}[theorem]{Corollary}
\newtheorem{conjecture}[theorem]{Conjecture}
\newtheorem{definition}[theorem]{Definition}
\newtheorem{problem}[theorem]{Problem}
\theoremstyle{remark}
\newtheorem{example}[theorem]{Example}
\newtheorem{remark}[theorem]{Remark}
\begin{document}
\begin{abstract}
It is conjectured that if $k$ is an algebraically closed field of characteristic $p > 0$, then any branched $G$-cover of smooth projective $k$-curves where the ``KGB'' obstruction vanishes and where a $p$-Sylow subgroup of $G$ is cyclic lifts to characteristic $0$. Obus has shown that this conjecture holds given the existence of certain meromorphic differential forms on $\proj^1_k$ with behavior determined by the ramification data of the cover. We give a more efficient computational procedure to compute these forms than was previously known. As a consequence, we show that all $D_{25}$- and $D_{27}$-covers lift to characteristic zero.   
\end{abstract}

\title{Local Oort groups and the isolated differential data criterion}
\author[Dang]{Huy Dang}
\address{University of Virginia}
\curraddr{141 Cabell Drive, Charlottesville, VA 22903}
\email{hqd4bz@virginia.edu}

\author[Das]{Soumyadip Das}
\address{Indian Statistical Institute, Bangalore Centre}
\curraddr{8th Mile, Mysore Road, Bangalore 560059, India}
\email{soumyadip\_rs@isibang.ac.in}

\author[Karagiannis]{Kostas Karagiannis}
\address{Aristotle University of Thessaloniki}
\curraddr{Department of Mathematics,			
School of Sciences,
54124, Thessaloniki, Greece}
\email{kkaragia@math.auth.gr}

\author[Obus]{Andrew Obus}
\address{Baruch College}
\curraddr{1 Bernard Baruch Way. New York, NY 10010, USA}
\email{andrewobus@gmail.com}

\author[Thatte]{Vaidehee Thatte}
\address{Binghamton University}
\curraddr{Binghamton, New York 13902-6000, USA}
\email{thatte@math.binghamton.edu}

\maketitle

\tableofcontents
\section{Introduction}

This paper concerns the \emph{local lifting problem}, which is stated as follows:

\begin{problem}[The local lifting problem]\label{Plocallifting}
Let $k$ be an algebraically closed field of characteristic $p$ and $G$ a finite group. Let $k[[z]]/k[[t]]$ be a $G$-Galois extension (that is, $G$ acts on $k[[z]]$ by $k$-automorphisms with fixed ring $k[[t]]$). Does this extension lift to characteristic zero?  That is, does there exist a DVR $R$ of characteristic zero with residue field $k$ and a $G$-Galois extension $R[[Z]]/R[[S]]$ that reduces to $k[[z]]/k[[s]]$?  
\end{problem}

Let us give some brief context --- for more details, see the expositions \cite{Ob:ll} and \cite{Ob:lc}.  The local lifting problem is motivated by the \emph{global lifting problem}, which asks whether a characteristic $p$ curve with a finite group of automorphisms (or, equivalently, a Galois branched cover of curves) lifts to characteristic zero.  In fact, solving the global lifting problem is equivalent to solving the local lifting problem for each extension coming from the complete local ring of a ramification point on the cover.  For tame covers, this reduces to the local lifting problem when $G$ is cyclic and prime to $p$, which is more or less trivial by Kummer theory, and gives an alternate proof of one of the main results of SGA1 (\cite[XIII, Corollaire 2.1]{SGA1}).  For more on this \emph{local-global principle}, see \cite[\S3]{Ob:lc}, or see the papers \cite{Ga:pr}, \cite{GM:lg}, and \cite{BM:df} for the original proofs.

We will refer to a $G$-Galois extension $k[[z]]/k[[t]]$ for $k$ an algebraically closed field of characteristic $p$ as a
\emph{local $G$-extension}.
Basic ramification theory shows that any group $G$ that occurs as the Galois group of a local extension is of the form $P \rtimes \ints/m$, with $P$ a $p$-group and $p \nmid m$.
In \cite{CGH:ll}, Chinburg, Guralnick, and Harbater
ask, given a prime $p$, for which groups $G$ (of the form $P \rtimes \ints/m$) is it true that
all local $G$-actions (over all algebraically closed fields of characteristic $p$) lift to
characteristic zero? Such a group is called a \emph{local Oort group}
(for $p$).  Due to various obstructions (The \emph{Bertin obstruction} of \cite{Be:ol}, the \emph{KGB obstruction} of \cite{CGH:ll}, and the \emph{Hurwitz tree obstruction} of \cite{BW:ac}), the list of possible local Oort groups is quite limited. In particular, the following proposition is a consequence of  \cite[Theorem 1.2]{CGH:ll} and \cite{BW:ac}.

\begin{proposition}\label{Poortlist}
If a group $G$ is a local Oort group for $p$, then $G$ is either cyclic, dihedral of order $2p^n$, or the alternating group $A_4$ (with $p=2$). 
\end{proposition}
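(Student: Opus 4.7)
The plan is to combine the two cited references sequentially, filtering the possible local Oort groups through the obstructions each constructs. Throughout, I would use the structural fact recalled immediately before the proposition: if $G$ is the Galois group of any local extension in characteristic $p$, then $G$ has the form $P \rtimes \ints/m$ with $P$ a $p$-group and $\gcd(m,p) = 1$. This already restricts the starting pool drastically, since $G/P$ must be cyclic.

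First I would invoke \cite[Theorem 1.2]{CGH:ll}, which uses the KGB obstruction to enumerate, among groups of the above form, those for which the obstruction vanishes for every local extension. Because a local Oort group must lift \emph{every} local $G$-action, and because the KGB obstruction is a necessary condition for lifting, any local Oort group must appear on this list. The resulting possibilities are cyclic groups, dihedral groups of order $2p^n$, the alternating group $A_4$ (only when $p=2$), and the generalized quaternion $2$-groups $Q_{2^n}$ (again only when $p=2$).

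The final step is to eliminate the generalized quaternion case. For this I would cite \cite{BW:ac}, where the Hurwitz tree obstruction is shown to be non-trivial for $Q_{2^n}$: explicit local $Q_{2^n}$-extensions are exhibited that admit no Hurwitz tree and hence cannot lift. Striking the generalized quaternion groups from the KGB list leaves exactly the three families stated in the proposition.

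The main obstacle, and the substantive mathematical content, sits inside the two references: constructing the KGB obstruction and executing the case-by-case classification in \cite{CGH:ll}, and then exhibiting non-liftable quaternion extensions via Hurwitz trees in \cite{BW:ac}. At the level of the present paper, the only remaining task is a bookkeeping check that the two lists dovetail --- that every group left over by the KGB filter is either on the final list or is killed by the Hurwitz tree obstruction, with no marginal small-order case (for instance $Q_8$) slipping through.
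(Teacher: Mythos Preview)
Your proposal is correct and matches the paper's own treatment: the paper does not give a detailed proof but simply states that the proposition is a consequence of \cite[Theorem 1.2]{CGH:ll} and \cite{BW:ac}, and your explanation of how these two references combine (the KGB obstruction leaving cyclic, dihedral, $A_4$, and generalized quaternion, with the Hurwitz tree obstruction then eliminating the quaternion case) is precisely the intended argument.
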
 

Cyclic groups are known to be local Oort --- this is the so-called \emph{Oort conjecture}, proven by Obus--Wewers and Pop in
\cite{OW:ce}, \cite{Po:oc}.  Obus proved that $A_4$ is local Oort in \cite{Ob:A4} (this was also independently known to Pop and Bouw).  This leaves the case of dihedral groups.  

Dihedral groups of order $2p$ are known to be local Oort for $p$ odd due to Bouw--Wewers (\cite{BW:ll}) and for $p = 2$ due to Pagot (\cite{Pa:rc}).
The group $D_9$ is local Oort by \cite{GenOort}, and the group $D_4$ is local Oort by \cite{We:D4}.  No other dihedral groups are known to be local Oort.  Our main theorem is:

\begin{theorem}
The groups $D_{25}$ and $D_{27}$ are local Oort.
\end{theorem}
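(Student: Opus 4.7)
The plan is to combine Obus's isolated differential data criterion (the reduction discussed in the introduction and named in the title) with the new efficient algorithm for computing the required meromorphic differentials that forms the main technical contribution of this paper. Since $D_{25} = D_{5^2}$ and $D_{27} = D_{3^3}$ are both of the form $\ints/p^n \rtimes \ints/2$, their $p$-Sylow subgroups are cyclic, so they fall within the scope of the criterion; and by Proposition \ref{Poortlist} they are among the finitely many dihedral candidates for local Oort groups not yet settled.

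First, I would enumerate, up to natural equivalence, the possible local $D_{p^n}$-extensions for $(p,n) \in \{(5,2),(3,3)\}$. Each such extension is determined by the sequence of upper ramification breaks $(u_1,\dots,u_n)$ of its unique cyclic subextension of order $p^n$, together with the constraints imposed by the action of the involution. I would also verify that the KGB obstruction vanishes for every admissible break sequence; for dihedral groups with $p$ odd, this is expected to reduce to a divisibility and parity check that is built into the admissibility conditions on the $u_i$.

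Next, for each admissible break sequence I would apply the isolated differential data criterion to translate liftability into the existence of a meromorphic form on $\proj^1_k$ with prescribed pole orders, residues, and vanishing conditions. Then I would deploy the new computational procedure to exhibit such a form in each case. Because this procedure is substantially more efficient than its predecessors, it becomes feasible to enumerate and resolve every case for $p^n \in \{25,27\}$; previous approaches, which were already pushed to their limits by $D_9$ in \cite{GenOort}, could not handle $p^n > 9$ exhaustively.

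The main obstacle is the finite but nontrivial case check: the criterion is one-directional in the sense that a missing differential form in even a single admissible case would leave the lifting question open for that case and hence disprove the theorem. For $D_{27}$ the cyclic part has three upper breaks, so the combinatorial complexity of admissible data and the arithmetic intricacy of the resulting systems on $\proj^1_k$ grow accordingly. The real content of the theorem, beyond the algorithmic speedup, is therefore the verification that in every admissible case the algorithm produces an explicit differential form, and hence a lift.
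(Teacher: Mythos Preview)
Your outline follows the paper's general strategy, but there are two genuine gaps that would prevent the argument from going through as written.

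First, the finiteness reduction is not as you describe it. Local $D_{p^n}$-extensions are \emph{not} finite up to natural equivalence: the upper ramification breaks $(u_1,\ldots,u_n)$ can be arbitrarily large, so there are infinitely many cases. The actual reduction (Proposition~\ref{prop_best_bound}, building on \cite[Propositions~1.11, 1.14]{GenOort}) is an inductive lifting argument showing that it suffices to verify the isolated differential data criterion only for quadruples $(p,m,\tilde{u},N)$ with $\tilde{u}$ bounded by $(p^{n-2}+\cdots+1)(mp-1)$. For $D_{25}$ this gives eight quadruples and for $D_{27}$ twelve; without this bound you have an infinite check and no proof. You also need not verify the KGB obstruction separately: for $m=2$ the congruence $u_i\equiv -1\pmod m$ is automatic, so every dihedral case is already admissible.

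Second, and more seriously, your plan to ``exhibit such a form in each case'' and to have ``the algorithm produce an explicit differential form'' does not work for $D_{27}$. For the quadruples $(3,2,17,34)$ and $(3,2,19,36)$ the paper is unable to produce an explicit polynomial $f$; the system (\ref{d.d.c.6}) could not be solved in closed form. The proof instead invokes the Gr\"obner basis \emph{existence} criterion of Proposition~\ref{grobner-criterion}: one shows that the ideal $J=(g_0,\ldots,g_N,1-ya_Nh)$ is not the unit ideal, which guarantees an isolated solution without exhibiting it. This existence test, made possible by the reformulation of isolatedness in terms of the coefficients of $f$ (Corollary~\ref{Cisolated1}) rather than its roots, is precisely the ``more efficient computational procedure'' advertised in the abstract, and the paper states explicitly that it is \emph{necessary} for the $D_{27}$ result. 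A plan committed to explicit construction in every case would stall on these two quadruples.
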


In fact, the paper \cite{GenOort} states a more general conjecture for groups with cyclic $p$-Sylow subgroup.  The conjecture below is a combination of Conjecture 1.9, Proposition 1.6, and Remark 1.7 of \cite{GenOort}.

\begin{conjecture}\label{Cmain}
Let $G \cong \ints/p^n \rtimes \ints/m$ be non-abelian with $p \nmid m$, and let $k[[z]]/k[[t]]$ be a local $G$-extension whose $\ints/p^n$-subextension has ramification jumps $(u_1, \ldots, u_n)$ for the upper numbering (\cite[IV]{Se:lf}) that are congruent to $-1 \pmod m$.  Then $k[[z]]/k[[t]]$ lifts to characteristic zero.
\end{conjecture}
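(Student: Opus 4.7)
The plan is to invoke the reduction of Obus mentioned in the abstract: the existence of a characteristic-zero lift is guaranteed once one produces a suitable collection of meromorphic ``isolated'' differential forms on $\proj^1_k$ whose pole multiplicities and residues are prescribed by the ramification jumps $(u_1,\ldots,u_n)$ and by the conjugation action of $\ints/m$ on $\ints/p^n$. The conjecture is thereby transformed into a concrete existence question for differentials on the projective line, together with a compatibility requirement across the $n$ steps of the characteristic filtration of $\ints/p^n$.

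First, I would decompose the cover along the characteristic filtration of $\ints/p^n$ and proceed by induction on $n$. The base case $n=1$, where $G \cong \ints/p \rtimes \ints/m$, is a theorem of Bouw--Wewers \cite{BW:ll} for odd $p$ and of Pagot \cite{Pa:rc} for $p=2$. For the inductive step, assuming that the $\ints/p^{n-1} \rtimes \ints/m$-subextension has been lifted, one must construct an additional differential $\omega_n$ governing the top $\ints/p$-step, and then patch the lift using the Hurwitz tree formalism of \cite{BW:ac}.

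Second, the construction of $\omega_n$ is where the hypothesis $u_i \equiv -1 \pmod m$ enters crucially: it ensures that the pole multiplicities dictated by the Hurwitz tree can be distributed $\ints/m$-equivariantly, so that the candidate differential descends through the $\ints/m$-action to the required quotient. The new computational procedure developed in the present paper then produces $\omega_n$ explicitly as a rational differential with poles at the roots of an auxiliary polynomial determined by the previous ramification data, reducing the existence of $\omega_n$ to the solvability of a concrete algebraic system over $k$.

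The principal obstacle is this final solvability question. Even with the explicit procedure in hand, proving consistency of the system for every admissible tuple $(u_1,\ldots,u_n)$ and every $n$ appears genuinely out of reach with current techniques; this is precisely why the present paper settles the conjecture only in the small cases $D_{25}$ and $D_{27}$, where the algebraic system can be verified directly. A uniform proof would presumably require either a deformation-theoretic argument at the level of the Hurwitz tree itself, or a moduli-theoretic understanding of the space of isolated differentials satisfying the prescribed boundary data, neither of which is currently available.
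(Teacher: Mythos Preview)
The statement you are attempting to prove is labeled a \emph{Conjecture} in the paper, and the paper does not claim a proof of it in general. There is therefore no ``paper's own proof'' to compare against; the paper's contribution is to verify the conjecture for $G = D_{25}$ and $G = D_{27}$ only, by checking the isolated differential data criterion for the finitely many quadruples $(p,m,\tilde{u},N)$ listed in Proposition~\ref{prop_best_bound}.

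Your outline of the strategy is an accurate summary of the framework of \cite{GenOort}: the reduction to constructing isolated differential forms, induction along the $\ints/p^n$-filtration with base case $n=1$ due to \cite{BW:ll} and \cite{Pa:rc}, and the role of the congruence $u_i \equiv -1 \pmod m$ in making the construction $\ints/m$-equivariant. You also correctly identify the genuine obstruction: no one knows how to show, for arbitrary $(p,m,\tilde{u},N)$, that the algebraic system of Proposition~\ref{system} admits a solution that is isolated in the sense of Corollary~\ref{Cisolated1}. That is precisely why Conjecture~\ref{Cmain} remains open.

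So your proposal is not so much a flawed proof as a correct recognition that there is no proof. The honest thing to write here is simply that the statement is conjectural, that the known approach reduces it to the isolated differential data criterion, and that this criterion has so far only been verified case by case.
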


\begin{remark}\label{Rcenterfree}
By \cite[Remark 1.7]{GenOort} (see also \cite[Remark 1.10]{OP:wt}), for a local $G$-extension to satisfy the hypotheses of Conjecture~\ref{Cmain}, it is necessary that $G$ be center-free.  That is, the conjugation action of $\ints/m$ on $\ints/p^n$ is faithful. 
\end{remark}

\begin{remark}
If the congruence condition of Conjecture~\ref{Cmain} is \emph{not} satisfied, then it is known that the local $G$-extension does \emph{not} lift to characteristic zero (see \cite[Proposition 1.6]{GenOort}).  
\end{remark}

\begin{remark}
If $m = 2$ (so that $G$ is dihedral), then the $u_i$ are \emph{always} odd, so Conjecture~\ref{Cmain} asserts that $D_{p^n}$ is local Oort for odd $p$ as a special case.
\end{remark}

In \cite{GenOort}, Conjecture~\ref{Cmain} was reduced to showing that the so-called \emph{isolated differential data criterion} holds in sufficiently many cases.  Saving the details for \S\ref{Sdiff}, we say now that this criterion is about finding a meromorphic differential form on $\proj^1_k$ with prescribed poles that transforms in a particular way under the Cartier operator, and which is ``isolated" in the sense that small deformations of the differential form do not satisfy these criteria.  In fact, constructing the differential form can be reduced to constructing a certain polynomial $f \in k[x]$, and the isolatedness criterion is stated in terms of the invertibility of a Vandermonde-like determinant arising from the roots of $f$.  One of the key intermediate results of this paper is Corollary~\ref{Cisolated1}, which rewrites this criterion in terms of the \emph{coefficients} of $f$.  This allows us to write an algorithm to verify the \emph{existence} of a satisfactory $f$ entirely in terms of Gr\"{o}bner bases (even if we cannot write the solution explicitly).  In particular, this existence criterion is necessary for us to prove that $D_{27}$ is local Oort.

\subsection{Outline of the paper}\label{Soutline}
In \S\ref{Sdiff} we discuss the isolated differential data criterion and its relation to the local lifting problem for dihedral groups, introduced in \cite{GenOort}. The criterion is defined in two steps: in Definition \ref{def_ddc} we state the differential data criterion, which is about the existence of a meromorphic differential form on $\mathbb{P}_k^1$ with pre-specified behavior under the Cartier operator, while the notion of isolatedness is made specific in Definition \ref{def_isolated}. The main result of this section is Proposition \ref{prop_best_bound} which essentially reduces the local lifting problem to verifying that the isolated differential data criterion holds for finitely many cases.

The meromorphic differential form of Definition \ref{def_ddc} is uniquely determined by a polynomial $f\in k[t]$. In the two subsequent sections, we reformulate each of the two conditions that make up the isolated differential data criterion so as to be expressed in terms of the vanishing or non-vanishing of certain polynomials in the coefficients of $f$. In particular, \S\ref{Smultinomial} deals with the differential data criterion - ignoring the isolatedness conditions. Its main result is Proposition \ref{system}, which says that the differential data criterion is equivalent to a system of equations in the coefficients of $f$, which involve multinomials obtained by the expansion of the polynomial $f^{p-1}$.

In \S\ref{Sisolated} we discuss the isolatedness condition and translate it to a condition in terms of the coefficients of $f$ rather than its roots.  Our result makes use of Heineman's work on generalized Vandermonde matrices and their determinants. The main result of this section is Corollary \ref{Cisolated1} in which we prove that the isolatedness condition is equivalent to the invertibility of a matrix whose entries are uniquely determined by the coefficients of $f$.

The main results of the two previous sections are combined in \S\ref{Sgrobner} in two different manners: Remark \ref{algorithm-1} describes an approach which requires first solving the system of Proposition \ref{system} then checking whether the respective matrix defined in Corollary \ref{Cisolated1} is invertible. The difficulty of explicitly solving the system of Proposition~\ref{system} motivates the existence criterion of Proposition \ref{grobner-criterion}, the main result of this section, in which we prove that the isolated differential data criterion holds if and only if an ideal uniquely determined by the equations of Proposition \ref{system} and Corollary \ref{Cisolated1} is not the unit ideal.

Finally, in \S\ref{Sresults} we use our results to prove that $D_{25}$ and $D_{27}$ are local Oort groups.  In fact, the small size of the input data allows us to explicitly realize the isolated differential data criterion as in Remark~\ref{algorithm-1} for all $D_{25}$ cases and all but two $D_{27}$ cases.  In these two cases, we must rely on the existence criterion of Proposition \ref{grobner-criterion}. 


\section*{Acknowledgements}
This project was conceived at the AMS Mathematics Research Community ``Explicit methods in Arithmetic Geometry in Characteristic $p$" in June 2019, and we thank the AMS and the organizers of that workshop.  In particular, we thank Drew Sutherland and Sachi Hashimoto for fruitful conversations during the workshop.  The second author would like to thank Indian Statistical Institute for partial travel support for the aforementioned workshop.

This material is based upon work supported by the National Science Foundation under Grant Numbers DMS-1641020 and DMS-1900396.  Support for this project was also provided by a PSC-CUNY Award, jointly funded by The Professional Staff Congress and The City University of New York.  

\section{The isolated differential data criterion}\label{Sdiff}
In this section we briefly recall the notion of the isolated differential data criterion following \cite[\S1.4]{GenOort}.
\begin{definition}\label{def_ddc}
Let $p$ be a prime number and $k$ be an algebraically closed field of characteristic $p$. Let $m > 1$ be an integer dividing $p-1$, and $\tilde{u}$ be a positive integer such that $\tilde{u} \equiv -1 \pmod{m}$. Let $N \in \{\tilde{u}(p-1), \tilde{u}(p-1) - m\}$. Define $u$ by $\tilde{u} = u p^\nu$, $p \nmid u$. We say that the differential data criterion is satisfied for the quadruple $(p,m,\tilde{u},N)$ (with respect to the field $k$) if there exists a polynomial $f(t) \in k[t^m]$ of degree $N$ such that the meromorphic differential form $\omega := \frac{dt}{f(t) t^{\tilde{u}+1}} \in \Omega^1_{k(t)/k}$ satisfies
\begin{equation}\label{d.d.c.}
\mathcal{C}(\omega) = \omega + u t^{-\tilde{u}-1} dt,
\end{equation}
where $\mathcal{C}$ is the Cartier operator on $\Omega^1_{k(t)/k}$.
\end{definition}

If the differential data criterion is realized by a meromorphic differential form $\omega$ (or equivalently, for an element $f(t) \in k[t^m]$), we will say that $\omega$ (or $f(t)$) is a \textit{solution} to the differential data criterion for $(p,m,\tilde{u},N)$.  

\begin{lemma}\label{Lfroots}
If $f(t)$ is a solution to the differential data criterion for $(p, m, \tilde{u}, N)$, then $f(t)$ is separable and is not divisible by $t$.
\end{lemma}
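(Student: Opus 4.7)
My plan is to prove both assertions by comparing local pole orders on both sides of equation~\eqref{d.d.c.} at appropriate points of $\proj^1_k$, exploiting the fact that the Cartier operator contracts pole orders by roughly a factor of $p$. The key input is the following local calculation: if $\eta$ is a meromorphic differential on $\proj^1_k$ with a pole of order $n \geq 1$ at a closed point $P$ with uniformizer $\pi$, then the formula $\mathcal{C}(\pi^{jp - 1}\, d\pi) = \pi^{j - 1}\, d\pi$ for $j \in \mathbb{Z}$, extended by $p^{-1}$-semilinearity (which is valid since $k$ is perfect), implies that $\mathcal{C}(\eta)$ has pole order at most $\lceil n/p \rceil$ at $P$. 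For any prime $p$, this bound is strictly less than $n$ whenever $n \geq 2$.

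To show that $t \nmid f$, I would argue by contradiction: suppose $t^k$ exactly divides $f$ for some $k \geq 1$. Then $\omega$ has pole order $n := k + \tilde{u} + 1 \geq 2$ at $t = 0$. Since $\tilde{u} + 1 < n$, the correction term $u t^{-\tilde{u} - 1}\, dt$ on the right-hand side of \eqref{d.d.c.} has strictly smaller pole order at $t = 0$ than $\omega$ and therefore cannot cancel the leading pole of $\omega$; consequently, the right-hand side has pole order exactly $n$ at $t = 0$. On the other hand, the contraction bound forces $\mathcal{C}(\omega)$ to have pole order at most $\lceil n/p \rceil < n$ at $t = 0$, contradicting \eqref{d.d.c.}.

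For separability, I would use the same mechanism at a root. Suppose $\alpha \in k^{\times}$ is a root of $f$ of multiplicity $e \geq 2$. At $t = \alpha$, the form $\omega$ has pole order $e$ (since $\alpha^{\tilde{u}+1} \neq 0$) while the correction term $u t^{-\tilde{u} - 1}\, dt$ is regular, so the right-hand side of \eqref{d.d.c.} has pole order exactly $e \geq 2$ at $t = \alpha$. The contraction bound again gives pole order at most $\lceil e/p \rceil < e$ for the left-hand side, a contradiction. Combined with the previous step, this shows every root of $f$ is simple and nonzero, which is exactly the conclusion of the lemma.

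I do not anticipate a substantial obstacle; the proof is essentially a direct local pole-order calculation. The only step requiring a bit of care is the contraction bound for $\mathcal{C}$, which is a standard consequence of its local defining formula on $\pi^{jp-1}\, d\pi$ together with $p^{-1}$-semilinearity over the perfect field $k$.
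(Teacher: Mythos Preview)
Your proof is correct and takes essentially the same approach as the paper: both compare the pole orders of the two sides of equation~\eqref{d.d.c.} at a root of $f$, using the fact that the Cartier operator contracts pole orders (so equality of pole orders forces a simple pole). The paper phrases this as ``the order of this pole is $1$'' and then reads off the two consequences, while you argue by contradiction in each case separately with the explicit bound $\lceil n/p\rceil$, but the content is identical.
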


\begin{proof}
If $\alpha$ is a root of $f(t)$, then by (\ref{d.d.c.}), the order of the pole of $\mc{C}(\omega)$ at $t = \alpha$ is the same as that of $\omega$.  From the basic properties of the Cartier operator (see \cite{MR84497}), the order of this pole is $1$.  Since $\tilde{u} \geq 1$, this is a contradiction for $\alpha = 0$.  If $\alpha \neq 0$, this shows that $\alpha$ has multiplicity one as a root of $f(t)$.
\end{proof}

From the basic properties of the Cartier operator $\mathcal{C}$ it follows that if $\omega$ is a solution to the differential data criterion for the quadruple $(p,m,\tilde{u},N)$, then $\omega$ must be of the form
\begin{equation}\label{eq_omega_form}
\omega = dg/g - u \sum_{i=0}^\nu t^{-up^i -1} dt
\end{equation}
for some rational function $g \in k(t)$.  Since $g$ appears only in the term $dg/g$, it is well defined up to multiplication by $p^{\text{th}}$ powers. Also note that $\omega$  has a zero of order $N+\tilde{u}-1$ at $t=\infty$.



\begin{definition}\label{def_isolated}
Let $p$, $m$, $\tilde{u}$, $N$ be as in Definition \ref{def_ddc}. Let $\omega$ be a solution to the differential data criterion for $(p,m,\tilde{u},N)$ where $\omega = dg/g - u \sum_{i=0}^\nu t^{-up^i -1} dt$ (cf.\ (\ref{eq_omega_form})). We say that the isolated differential data criterion holds for $(p,m,\tilde{u},N)$ if no infinitesimal deformation $\tilde{g}$ of $g$ gives rise to a differential form $\widetilde{\omega} = d \tilde{g}/ \tilde{g} - u \sum_{i=0}^\nu t^{-up^i -1} dt$ having a zero of order at least $N+\tilde{u}-1$ at $t =\infty$.
\end{definition}

Let $f(t)$ be a solution to the differential data criterion for $(p,m,\tilde{u},N)$ and write $x_1, \ldots, x_{N/m}$ for a set of representatives, one from each distinct $\mu_m$-orbit of roots of $f(t)$. By \cite[Definition 7.23, Remark 7.24]{GenOort}, the solution $f(t)$ is isolated if the Vandermonde-like matrix $(x_j^{q-1})_{q \in S, 1 \leq j \leq N/m}$ is invertible, where $S$ is the set
\begin{equation*}
S := \{1 \leq q \leq N+\tilde{u}-1 \mid q \equiv -1 \pmod{m}, p \nmid q\}.
\end{equation*}

Now let $p$ be an odd prime, and $G \cong \ints/p^n \rtimes \ints/m$ be non-abelian with $p \nmid m$. When $n=1$, it was shown in \cite{BW:ll} and \cite{BWZ:dd} that Conjecture \ref{Cmain} holds. For the rest of this paper we assume $n \geq 2$. By \cite[Remark 1.16]{GenOort}, Conjecture \ref{Cmain} is reduced to showing that the isolated differential data criterion holds for certain finitely many quadruples $(p,m,\tilde{u},N)$. Using the same argument the following result shows that it is sufficient to check even fewer quadruples.

\begin{proposition}\label{prop_best_bound}
Let $p$ be an odd prime, and $m$ be a positive integer dividing $p-1$. Let $n \geq 2$. Suppose that for each $m-1 \leq \tilde{u} \leq (p^{n-2}+\cdots+1)(mp-1)$ with $p^{n-1} \nmid \tilde{u}$ and $\tilde{u} \equiv -1 \pmod{m}$, the isolated differential data criterion holds for the quadruples $(p,m,\tilde{u},(p-1)\tilde{u})$ and $(p,m,\tilde{u},(p-1)\tilde{u}-m)$. Then Conjecture~\ref{Cmain} holds for all non-abelian groups  $\ints/p^n \rtimes \ints/m$.
\end{proposition}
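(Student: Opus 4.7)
The plan is to adapt the reduction of \cite[Remark 1.16]{GenOort}, sharpening the range of quadruples that must be checked. The argument there builds a lift of a local $\ints/p^n \rtimes \ints/m$-extension by an inductive Hurwitz tree construction along the ramification filtration $(u_1, \ldots, u_n)$ of the $\ints/p^n$-subextension, with the isolated differential data criterion at each depth guaranteeing the inductive step. I would first review that setup, noting how each depth $\nu$ in the construction corresponds to a quadruple $(p, m, \tilde u, N)$ with $\nu_p(\tilde u) = \nu$ and $N \in \{(p-1)\tilde u, (p-1)\tilde u - m\}$, and that the Hasse--Arf integrality together with the hypothesis $u_i \equiv -1 \pmod m$ force $\tilde u \equiv -1 \pmod m$.

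The first simplification is to bypass the top depth. When $\nu = n-1$, the required extension of the inductive lift can be obtained from the cyclic Oort theorem \cite{OW:ce,Po:oc} applied to the top piece of the tower, rather than via the isolated differential data criterion: the $\ints/m$-action has already been realized on the lift of the $\ints/p^{n-1}$-subextension, so extending to $\ints/p^n \rtimes \ints/m$ amounts to a purely cyclic problem at the top. This restricts the quadruples that must be checked to those with $\nu_p(\tilde u) \leq n - 2$, which is exactly the condition $p^{n-1} \nmid \tilde u$ in the hypothesis.

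The second simplification is to bound $\tilde u$ from above. I would argue inductively on the depth $\nu$: at depth $0$, the constraints $\tilde u \equiv -1 \pmod m$ and $p \nmid \tilde u$, combined with the compatibility imposed by the $\ints/m$-action on the base of the Hurwitz tree, limit $\tilde u \leq mp - 1$. At each subsequent depth, the Hasse--Arf relation together with the compatibility of the differentials across levels of the tree multiplies the previous bound by $p$ and contributes an additional $mp - 1$, yielding $\tilde u \leq (p^{\nu-1} + \cdots + 1)(mp-1)$ at depth $\nu$. Specializing to the deepest relevant case $\nu = n-2$ gives the stated upper bound, while the lower bound $\tilde u \geq m - 1$ is simply the smallest admissible $\tilde u \equiv -1 \pmod m$.

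The main obstacle is justifying the inductive bound in the last paragraph: one must show that any ramification data requiring $\tilde u$ above the stated bound at some depth can be reduced, via $\ints/m$-equivariant Hurwitz tree moves, to data within the bound, without altering the liftability question. This requires tracking precisely how the differential data criterion (and its isolatedness) behaves under such moves, which is the technical core of the argument. The remainder of the proof then combines this reduction with the hypothesized verification of the criterion within the bounded range and with the cyclic Oort theorem at the top level to complete the lift.
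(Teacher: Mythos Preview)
Your proposal misidentifies both the inductive structure of the argument and the sources of the two simplifications.

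First, the quadruples $(p,m,\tilde u,N)$ arising in \cite[Proposition~1.14]{GenOort} are indexed by the position $i$ in the ramification filtration: one needs the criterion for $\tilde u = u_{i-1}$ with $2 \le i \le n$, not for quadruples stratified by ``depth $\nu = \nu_p(\tilde u)$.'' There is no a priori relation between the index $i-1$ and the $p$-adic valuation of $u_{i-1}$, so your depth-indexed framework does not match the actual induction.

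Second, the condition $p^{n-1}\nmid\tilde u$ does not come from bypassing a top step via the cyclic Oort theorem. It is a ramification-theoretic fact (\cite[Lemma~19]{Pries}): for any local $\ints/p^n$-extension one always has $p^{n-1}\nmid u_i$ for $1\le i<n$. Since the $\tilde u$ to be checked are exactly $u_1,\ldots,u_{n-1}$, the case $p^{n-1}\mid\tilde u$ simply never occurs. Your proposed shortcut via the cyclic Oort theorem would in any case not supply the $\ints/m$-equivariance of the extended lift, which is precisely what the isolated differential data criterion is designed to guarantee.

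Third, the upper bound on $\tilde u$ requires no ``$\ints/m$-equivariant Hurwitz tree moves.'' The reduction \cite[Proposition~1.11]{GenOort} already allows one to assume $u_1<mp$ and $pu_{i-1}\le u_i<pu_{i-1}+mp$ for $2\le i\le n$. From these inequalities the bound $u_{i-1}\le(p^{i-2}+\cdots+1)(mp-1)$ follows by elementary induction on $i$, and taking $i=n$ gives the claimed range. The ``main obstacle'' you describe does not exist: the hard work was done in \cite{GenOort}, and this proposition is a short sharpening via the two citations above plus arithmetic.
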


\begin{proof}
By Remark~\ref{Rcenterfree}, Conjecture~\ref{Cmain} is vacuous unless $\ints/p^n \rtimes \ints/m$ is center-free. So we may assume this is true. In particular, $m|(p-1)$. By \cite[Proposition 1.11]{GenOort}, it suffices to prove Conjecture~\ref{Cmain} for $\ints/p^n \rtimes \ints/m$-extensions whose $\ints/p^n$-subextension has  upper ramification breaks $(u_1,\ldots, u_n)$ with $u_1 < mp$ and $pu_{i-1} \leq u_i < pu_{i-1} + mp$ for all $1 < i \leq n$.  By \cite[Lemma 19]{Pries}, we note that $p^{n-1} \nmid u_i$ for any $1 \leq i < n$. Let $L/k[[s]]$ be such an extension. Set $u_0 = 0$. By \cite[Proposition 1.14]{GenOort}, if for each $1 < i \leq n$, both $u_i \equiv -1 \pmod{m}$ and $pu_{i-1} \leq u_i \leq pu_{i-1}+mp-1$, and also the isolated differential data criterion holds for $(p,m,u_{i-1},N)$ where 
\begin{equation*}
N = \begin{cases}
    (p-1)u_{i-1}, & \text{if } u_i = pu_{i-1} \text{ and}\\
    (p-1)u_{i-1}-m,               & \text{otherwise},
\end{cases}
\end{equation*}
then the extension $L/k[[s]]$ lifts to characteristic $0$. This inductive criterion on $u_i$ means $u_1 \leq mp-1$ and for $i \geq 2$, we have $u_{i-1} \leq (p^{i-2}+\cdots+1)(mp-1)$. By our hypothesis, these conditions are satisfied and the extension $L/k[[s]]$ lifts.
\end{proof}

\begin{remark}\label{rmk_best_bound}
Note that the upper bound on $\tilde{u}$ in Proposition \ref{prop_best_bound} is stronger than the bound $\tilde{u} \leq m(p^{n-1}+\cdots+p)$ listed in \cite[Remark 1.16]{GenOort}. 
\end{remark}

In the next sections we reduce the isolated differential data criterion for each quadruple into an equivalent statement about the existence of solutions to a system of multivariate polynomial equations, which is much easier to implement computationally.

\section{The multinomial coefficient approach}\label{Smultinomial}
Throughout this section let $p$ be an odd prime, and $k$ be an algebraically closed field of characteristic $p$. Let $(p,m,\tilde{u},N)$ be a quadruple satisfying the condition of Definition \ref{def_ddc}. Our first result shows that the differential data criterion can be formulated in terms of the existence of a solution to a system of polynomial equations with coefficients in $k$, hence can be studied using computational techniques. For any polynomial $f(t) \in k[t^m]$ of degree $N$ with $m|N$, we write
\begin{equation}\label{f-exp}
f=f(t)=\sum_{i=0}^{N/m} a_{m i} t^{mi}\in k[t^m].
\end{equation}

\begin{proposition}\label{system}
A polynomial $f(t) = \sum_{i=0}^{N/m} a_{mi}t^{mi}$ is a solution to the differential data criterion for $(p,m,\tilde{u},N)$ if and only if there are elements $a_{mi} \in k$, $0 \leq i \leq N/m$, satisfying the following system of equations.
\begin{equation}\label{d.d.c.6}
\begin{cases}
a_0=-u^{-1}\\
ua_{m i}^p = c_{p m i-\tilde{u}(p-1)}, \text{ for } \lceil \tilde{u}-\tilde{u}/p\rceil \leq m i \leq N\\
a_{m i}=0, \text{ otherwise} \\
a_N \neq 0,  \hspace{5mm}   \\  
\end{cases}
\end{equation}
where for $0\leq j \leq (p-1)N/m$, the $c_{mj}$'s are given by
\begin{equation}\label{eqncj}
c_{mj}=
\sum_{\substack{(s_0,\ldots,s_{N/m})\in\mathbb{N}^{N/m+1} \\s_0+s_1+\cdots+s_{N/m} =p-1\\ ms_1+ 2ms_2 +\cdots + Ns_{N/m}=mj}}
\binom{p-1}{s_0,\ldots,s_{N/m}}
\prod_{i=0}^{N/m}a_{m i}^{s_{i}} \in k \left[\{a_{m i}\}_{0\leq i\leq N/m}\right].
\end{equation}
\end{proposition}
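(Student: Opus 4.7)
My strategy is to compute $\mathcal{C}(\omega)$ explicitly and compare coefficients in the identity $\mathcal{C}(\omega) = \omega + u\,t^{-\tilde{u}-1}\,dt$. The central manipulation is to write
\[
\omega \;=\; \frac{dt}{f(t)\,t^{\tilde{u}+1}} \;=\; \frac{1}{f(t)^p}\cdot \frac{f(t)^{p-1}}{t^{\tilde{u}+1}}\,dt,
\]
which isolates a $p$-th power factor $1/f(t)^p$ that may be pulled outside the Cartier operator via the semilinearity $\mathcal{C}(h^p\eta) = h\,\mathcal{C}(\eta)$:
\[
\mathcal{C}(\omega) \;=\; \frac{1}{f(t)}\cdot \mathcal{C}\!\left(\frac{f(t)^{p-1}}{t^{\tilde{u}+1}}\,dt\right).
\]

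Next I would expand $f(t)^{p-1}$ using the multinomial theorem: this produces precisely the coefficients $c_{mj}$ of (\ref{eqncj}), with $c_J = 0$ whenever $m \nmid J$ or $J \notin [0,(p-1)N]$. Using the standard facts $\mathcal{C}(t^i\,dt) = 0$ for $i \not\equiv -1 \pmod p$, $\mathcal{C}(t^{pk-1}\,dt) = t^{k-1}\,dt$, together with Frobenius-inverse semilinearity $\mathcal{C}(c\,\eta) = c^{1/p}\,\mathcal{C}(\eta)$ for $c \in k$, the inner Cartier collapses -- after filtering on $J \equiv \tilde{u} \pmod p$ and writing $J = \tilde{u} + pk$ -- to $\sum_{k} c_{\tilde{u}+pk}^{1/p}\,t^{k-1}\,dt$. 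Substituting into $\mathcal{C}(\omega) = \omega + u\,t^{-\tilde{u}-1}\,dt$, clearing denominators by multiplying through by $f(t)\,t^{\tilde{u}+1}$, and reindexing via $\ell := k + \tilde{u}$, I obtain the polynomial identity
\[
\sum_{\ell \ge 0} c_{p\ell - (p-1)\tilde{u}}^{1/p}\,t^{\ell} \;=\; 1 + u\,f(t).
\]

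From here the proposition follows by comparing coefficients of $t^\ell$. The $\ell = 0$ term gives $0 = 1 + u\,a_0$, since $-(p-1)\tilde{u} < 0$ forces $c_{-(p-1)\tilde{u}} = 0$; this yields $a_0 = -u^{-1}$. For $\ell = mi$ with $1 \le i \le N/m$ and $pmi - (p-1)\tilde{u} \ge 0$ (equivalently $mi \ge \lceil \tilde{u} - \tilde{u}/p\rceil$), matching gives $c_{pmi - (p-1)\tilde{u}}^{1/p} = u\,a_{mi}$; raising to the $p$-th power and invoking that $u$ is a positive integer not divisible by $p$, so $u \in \mathbb{F}_p^{\times} \subset k$ and $u^p = u$ in $k$, converts this to $u\,a_{mi}^p = c_{pmi - (p-1)\tilde{u}}$. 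For $mi < \lceil \tilde{u} - \tilde{u}/p\rceil$ the index is negative, so $c = 0$, forcing $a_{mi} = 0$ -- this is the ``otherwise'' clause. One finally checks that indices $\ell$ not of the form $mi$ contribute trivial identities, since $m \mid p - 1$ implies $p\ell - (p-1)\tilde{u} \equiv \ell \pmod m$ and hence $c_{p\ell - (p-1)\tilde{u}} = 0$ whenever $m \nmid \ell$; likewise $\ell > N$ yields vanishing coefficients on both sides, using the bounds $N \ge (p-1)\tilde{u} - m$. The condition $a_N \neq 0$ is simply $\deg f = N$, and every step is reversible, giving the ``if and only if''.

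The main technical subtlety is the emergence of $p$-th roots from the Frobenius-inverse semilinearity of $\mathcal{C}$: a naive reading of the coefficient match produces $u^p\,a_{mi}^p = c_{pmi-(p-1)\tilde{u}}$, and the cancellation to the stated $u\,a_{mi}^p = c_{pmi-(p-1)\tilde{u}}$ relies precisely on the observation $u^p = u$ in $k$ for $u \in \mathbb{F}_p^{\times}$. Beyond this, the proof is careful bookkeeping of the extremal indices ($\ell = 0$, $\ell > N$, $m \nmid \ell$) to ensure no additional constraints are introduced or overlooked.
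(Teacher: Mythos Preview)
Your proof is correct and follows essentially the same route as the paper's own argument. Both use the $p$-semilinearity $\mathcal{C}(h^p\eta)=h\,\mathcal{C}(\eta)$ to reduce (\ref{d.d.c.}) to a polynomial identity, expand $f^{p-1}$ via the multinomial theorem to produce the $c_{mj}$, and then compare coefficients. The only cosmetic differences are that the paper pulls out the full factor $(t^{\tilde{u}+1}f)^{-p}$ (rather than just $f^{-p}$) and then raises the resulting polynomial identity to the $p$th power globally, whereas you raise each coefficient equation individually; your treatment of the edge cases $m\nmid\ell$ and $\ell>N$ is in fact more explicit than the paper's.
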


\begin{proof}
Note that the coefficient of $t^{mj}$ in the expansion of the $(p-1)$-th power of $f(t)$ is $c_{mj}$ given by Equation (\ref{eqncj}) and so
\begin{equation*}\label{fpower-exp}
f^{p-1}=\sum_{j=0}^{(p-1)N/m} c_{m j} t^{m j}\in k[t^m].
\end{equation*}
By Definition \ref{def_ddc}, $(p,m,\tilde{u},N)$ satisfies the differential data criterion if and only if there exists a polynomial $f\in k[t^m]$ such that
\begin{equation*}\label{d.d.c.1}
\mathcal{C}\left( \frac{1}{t^{\tilde{u}+1}f}dt\right)=\frac{1+uf}{t^{\tilde{u}+1}f}dt.
\end{equation*}
This is equivalent to
\begin{equation*}\label{d.d.c.2}
\mathcal{C}\left( t^{(p-1)(\tilde{u}+1)}f^{p-1}dt\right)=(1+uf)dt.
\end{equation*}
Substituting the explicit forms of $f$ and $f^{p-1}$ we have
\begin{equation*}\label{d.d.c.4}
\mathcal{C}\left( \displaystyle\sum_{j=0}^{(p-1)N/m} c_{m j} t^{m j+\tilde{u}(p-1)}t^{p-1}dt\right) = \left(1+u\displaystyle\sum_{i=0}^{N/m} a_{m i} t^{m i}\right)dt.
\end{equation*}
From the basic properties of the Cartier operator we know that $\mathcal{C}$ is an additive map that sends any differential form $g^p t^{p-1} dt$ to $gdt$, and which kills any term $ct^b dt$ where $b \not \equiv -1 \pmod{p}$. So the above equation is equivalent to
\begin{equation*}\label{d.d.c.5}
\sum_{\substack{0 \leq j \leq (p-1)N/m \\ p| (mj+\tilde{u}(p-1))}} c_{m j} t^{m j+\tilde{u}(p-1)} = 
\left(1 + u\displaystyle\sum_{i=0}^{N/m} a_{m i} t^{m i}\right)^p = 1 + u\displaystyle\sum_{i=0}^{N/m} a_{m i}^p t^{p m i}.
\end{equation*}
Since the least exponent of the left hand side is $\tilde{u}(p-1)$, we obtain $1+ua_0^p=0$ and $a_{m i}=0$ when $p m i< \tilde{u}(p-1)$ or equivalently, when $m i <\tilde{u}-\tilde{u}/p$. For $m i \geq \lceil \tilde{u}-\tilde{u}/p\rceil$ we get $u a_{m i}^p=c_{m j}$ exactly when $m j+\tilde{u}(p-1)=p m i$, i.e. $u a_{m i}^p=c_{p m i-\tilde{u}(p-1)}$. The result follows.
\end{proof}

\begin{remark}\label{rmk_multi_coeff_homogen}
Note that each of the $c_{p m i-\tilde{u}(p-1)}$ is a homogeneous polynomial of degree $p-1$ in the variables $a_0$, $a_{\lceil \tilde{u} -\tilde{u}/p \rceil}$, $\cdots$, $a_N$.
\end{remark}

\begin{example}\label{ex32510differential}
By Proposition \ref{system}, the quadruple $(3,2,5,10)$ satisfies the differential data criterion if there exists a polynomial $f(t)=a_0+a_2t^2+a_4t^4+a_6t^6+a_8t^8+a_{10}t^{10}$ whose coefficients satisfy the following system of polynomial equations
\begin{equation*}
\begin{cases}
a_0 = -5^{-1} = 1, \\
5a_6^3 = -a_8, \\
5a_8^3 = -a_6a_8,  \\  
5a_{10}^3 = a_{10}^2, \\
a_2 = a_4 = 0, \\
a_{10} \neq 0.
\end{cases}
\end{equation*}
\noindent By a direct computation, one can show that the only solutions to this system are $f(t)=2t^{10}+1$ and $f(t)=2t^{10}+t^8+t^6+1$.
\end{example}

\section{Test for isolatedness}\label{Sisolated}

In this section, the notation $(g(i,j))_{i,j}$ means the matrix whose $ij^\text{th}$ entry is $g(i,j)$. Suppose $f(t)$ is a solution to the differential data criterion for a quadruple $(p, m, u, N)$.  As in Equation (\ref{f-exp}), write $f(t) = \sum_{i=0}^{N/m} a_{mi}t^{mi}$. By Lemma~\ref{Lfroots}, $f$ is separable and does not have $0$ as a root, so let $x_1, \ldots, x_{N/m}$ be a list consisting of one representative from each $\mu_m$-orbit of the roots of $f(t)$.  As was mentioned in \S\ref{Sdiff},
$f$ realizes the isolated 
differential data criterion if and only if the matrix
\[
M := (x_j^{q-1})_{q,j}
\]
is invertible, where $j$ ranges from $1$ to $N/m$ and $q$ ranges over all numbers from $1$ to $N + \tilde{u} - 1$ that are congruent to $-1 \pmod{m}$ and are not divisible by $p$.  In fact, since $\tilde{u} \equiv -1 \pmod{m}$ and $m \mid N$, the largest value of $q$ is $N + \tilde{u} - m$.  This matrix is always square (\cite[Remark~7.20]{GenOort}), and $x_{i_1}/x_{i_2} \notin \mu_m$ whenever $i_1 \neq i_2$.

In Corollary~\ref{Cisolated1} below, we give a criterion for 
the isolatedness in terms of the \emph{coefficients} of $f$, rather than its roots.  Indeed, one simply needs to check that a matrix made from coefficients of $f$ is invertible.  Since this criterion does not require computing roots of polynomials, it is computationally easier to verify.  

The starting point is a classical formula of Heineman (\cite{Heineman}) computing generalized Vandermonde determinants, where a \emph{generalized Vandermonde matrix} is a square matrix of the form $(z_j^{b_i})_{i,j}$ where the $a_i$ are integers, but $b_i$ are not necessarily equal to $i-1$.  The \emph{principal Vandermondian} associated to a generalized Vandermonde matrix is the determinant of the matrix given by $(z_j^{i-1})_{i,j}$, i.e., the standard Vandermonde determinant associated to the entries $z_1, \ldots, z_j$.

Since none of the $x_j$ is zero, we can form a new matrix $M'$ by dividing the $j^{\text{th}}$ column of $M$ by $x_j^{m-1}$, and $M'$ is invertible if and only if $M$ is. Now, if we let $y_j = x_j^m$, 
$M'$ can be expressed as follows:
\begin{equation}\label{Emprime}
M' = (y_j^{r})_{r,j},
\end{equation} where $1 \leq j \leq N/m$ as before and the $r$ range from 
$0$ to $(N + \tilde{u} - 2m + 1)/m$, skipping all values of $r$ such that $p \mid (mr
+ m - 1)$.  Since the $x_j$ lie in different $\mu_m$-orbits, the $y_j$ are 
pairwise distinct.  Thus $M'$ is a generalized Vandermonde matrix whose 
corresponding ``principal Vandermondian" (in the language of \cite{Heineman}) is nonzero.  
So it suffices to give a criterion for when the quotient of 
$\det(M')$ by this principal Vandermondian is $0$.

\begin{lemma}\label{Llargestindex}
Let $r_1, \ldots, r_{N/m}$ be the values of $r$ in the matrix $M'$ from (\ref{Emprime}) in ascending order.  Let $\epsilon$ be such that $N = (p-1)\tilde{u} - \epsilon m$ (so $\epsilon \in \{0,1\}$).
\begin{enumerate}[\upshape(i)]
\item We have $r_{N/m} - N/m + 1 = (\tilde{u}-m+1)/m$.  
\item  There are exactly $p-1-\epsilon$ values of $i$ such that $r_i - i + 1 = (\tilde{u}+m-1)/m$.

\end{enumerate}

\end{lemma}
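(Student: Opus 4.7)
The plan is to translate the lemma into a counting problem about the sequence $0, 1, 2, \ldots$ with a single forbidden residue class modulo $p$ removed. Setting $\tilde{u} = m\tau - 1$ (valid since $\tilde{u} \equiv -1 \pmod m$) and $m' := (p-1)/m$, the upper bound on $r$ becomes $R := (N + \tilde{u} - 2m + 1)/m = p\tau - m' - \epsilon - 2$, while $N/m = (p-1)\tau - m' - \epsilon$. The congruence $p \mid mr + m - 1$ is equivalent to $r \equiv -m' - 1 \pmod p$ (using $mm' \equiv -1 \pmod p$), so the unique forbidden residue in $[0, p-1]$ is $r_0 := p - m' - 1$, and the sequence $r_1, \ldots, r_{N/m}$ is the enumeration of $\{0, 1, \ldots, R\} \setminus \{r_0, r_0 + p, r_0 + 2p, \ldots\}$ in ascending order. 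The key observation throughout is that $r_i - i + 1$ equals the number of forbidden values that are $\leq r_i$: among the integers $0, 1, \ldots, r_i$, exactly $i$ are non-forbidden (namely $r_1, \ldots, r_i$), so the remaining $r_i + 1 - i$ are forbidden.

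For part (i), I would first check that $R$ itself is not forbidden: modulo $p$, one has $R \equiv -m' - \epsilon - 2$ while $r_0 \equiv -m' - 1$, so equality would force $p \mid \epsilon + 1$, which is impossible for $\epsilon \in \{0,1\}$ and $p$ odd. Hence $r_{N/m} = R$, and direct subtraction yields $r_{N/m} - N/m + 1 = \tau - 1 = (\tilde{u} - m + 1)/m$.

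For part (ii), the target indices are those on the final plateau of the non-decreasing quantity $r_i - i + 1$, which attains the value $\tau - 1$ there. These are characterized by $r_0 + (\tau - 2)p < r_i \leq R$: the element $p(\tau-1) - m' - 1 = r_0 + (\tau - 2)p$ is the largest forbidden value not exceeding $R$, since the next candidate $r_0 + (\tau - 1)p = p\tau - m' - 1$ overshoots $R$ by exactly $\epsilon + 1 \geq 1$ (again by the modular computation above). Counting integers in the half-open interval $(r_0 + (\tau - 2)p, R]$ gives $R - (r_0 + (\tau - 2)p) = p - 1 - \epsilon$, and by construction none of them is forbidden, which is the asserted count.

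The entire argument is elementary once the substitution $\tilde{u} = m\tau - 1$ is made and the forbidden set is recognized as a single residue class modulo $p$. The main obstacle is purely notational bookkeeping across $\tau$, $m'$, $\epsilon$, and $r_0$ through the various substitutions; no deeper input is needed beyond the interpretation of $r_i - i + 1$ as a counting function of forbidden values and the two modular checks that ensure $R$ sits strictly between the $(\tau-2)$-th and $(\tau-1)$-th forbidden values.
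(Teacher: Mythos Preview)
Your proof is correct and follows essentially the same approach as the paper's: both arguments verify that the top value $R$ is not in the forbidden residue class (hence $r_{N/m}=R$, giving (i) by subtraction), then locate the last forbidden value below $R$ and count the integers strictly above it up to $R$ to obtain $p-1-\epsilon$ for (ii). Your substitution $\tilde{u}=m\tau-1$, $m'=(p-1)/m$ and the explicit interpretation of $r_i-i+1$ as the number of forbidden values at most $r_i$ make the bookkeeping cleaner, but the logical skeleton is identical; note also that you (correctly, and in agreement with the paper's own proof) establish the value $(\tilde{u}-m+1)/m$ in part (ii), whereas the displayed statement reads $(\tilde{u}+m-1)/m$, which appears to be a typo.
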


\begin{proof}
For (i), it suffices to show that $r_{N/m} = (N+\tilde{u}-2m+1)/m$.  By the construction of $M'$, we need to only show that $p \nmid (N + \tilde{u} - m)$.  Since $N+\tilde{u} = p\tilde{u} - \epsilon m$, one needs only to show that $p$ does not divide $(\epsilon+1)m$.  This holds because $p \nmid m$ and $p > 2 \geq \epsilon+1$.

To prove (ii), note that (i) implies $mr_{N/m} + m - 1 = N + \tilde{u} - m = p\tilde{u} - (1+\epsilon)m$, so the largest value of $r$ less than $r_{N/m}$ such that $p \mid (mr+m-1)$ is that for which $mr + m - 1 = p\tilde{u} - pm$.  So $r_i - i + 1 = (\tilde{u}-m+1)/m$ exactly when $mr_i + m - 1 \geq p\tilde{u} - (p-1)m$.  Since $N= (p-1)\tilde{u} - \epsilon m$, this is exactly when $r_i \geq (N+\tilde{u}-(p-\epsilon)m + 1)/m$.  By (i), $r_{N/m} - (N+\tilde{u}-(p-\epsilon)m + 1)/m = p - 2 - \epsilon$.  We have proven the second assertion.
\end{proof}

\begin{remark}\label{Rjumps}
Note that, for each element $j$ of $\{1, 2, \ldots, (\tilde{u}-m+1)/m - 1\}$, there are exactly $p-1$ values of $i$ such that $r_i - (i-1) = j$.  Intuitively, we have $r_i - (i-1) = 0$ for $i = 1, 2, \ldots$ until we ``jump" over an $r$ such that $p \mid (mr + m - 1)$.  Then $r_i - (i-1) = 1$ for the next $p-1$ values of $i$, until we jump over another such $r$.  Then $r_i - (i-1) = 2$ for the next $p-1$ values of $i$, etc.  By Lemma~\ref{Llargestindex}, the largest value of $r_i - (i-1)$ occurs only $p-1-\epsilon$ times, as opposed to $p-1$ times.
\end{remark}

Echoing the notation of \cite{Heineman}, we write $V_i$ for the determinant of the matrix obtained by writing $(N/m + 1) \times N/m$ matrix
$$(y_j^r)_{r,j}$$
with $j$ ranging from $1$ to $N/m$ and $r$ ranging from $0$ to $N/m$, and then removing the $(N/m - i$)th row.  Note that $V_0$ is the principal Vandermondian, and is thus non-zero.  We have the following proposition.

\begin{proposition}[{\cite[Theorem I]{Heineman}}]\label{Psymfunction}
For all $i$, the ratio $V_i/V_0$ equals the $i$th elementary symmetric function in the $y_j$.
\end{proposition}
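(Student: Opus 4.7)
The plan is to extract the formula from a single cofactor expansion of an augmented Vandermonde determinant. Set $n = N/m$ and introduce the $(n+1) \times (n+1)$ matrix $A(t)$ whose rows are indexed by $r = 0, 1, \ldots, n$ and whose columns are the power vectors of $y_1, \ldots, y_n, t$; concretely, $A(t)_{r,j} = y_j^r$ for $j \leq n$ and $A(t)_{r,n+1} = t^r$. Since $A(t)$ is the Vandermonde matrix of the tuple $(y_1, \ldots, y_n, t)$, its determinant factors as
$$\det A(t) = V_0 \prod_{j=1}^n (t - y_j),$$
where $V_0$ is precisely the principal Vandermondian of the statement.

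Next I would expand $\det A(t)$ along the last column. For each $r \in \{0, 1, \ldots, n\}$, the minor obtained by deleting row $r$ and column $n+1$ is the $n \times n$ matrix whose exponent set is $\{0, 1, \ldots, n\} \setminus \{r\}$, which is by definition (up to sign) the determinant $V_{n-r}$. Reindexing by $i = n - r$ and tracking the cofactor sign $(-1)^{(r+1)+(n+1)}$ arising from $0$-indexed rows produces the clean expansion
$$\det A(t) = \sum_{i=0}^n (-1)^i\, V_i\, t^{n-i}.$$
On the other hand, direct expansion of the product gives $\prod_{j=1}^n (t - y_j) = \sum_{i=0}^n (-1)^i\, e_i(y_1, \ldots, y_n)\, t^{n-i}$. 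Equating coefficients of $t^{n-i}$ in the two expressions for $\det A(t)$ and dividing by the nonzero $V_0$ yields $V_i/V_0 = e_i(y_1, \ldots, y_n)$, as claimed.

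The main obstacle is really only the bookkeeping: one must pin down that the paper's row-deletion convention (remove row $r = n-i$, not $r = i$) together with the $0$-indexing of rows is compatible with the cofactor sign $(-1)^i$ in the final expansion. Once that is verified, the whole argument collapses to a one-line comparison of polynomial coefficients. Alternatively, one could invoke the Jacobi bialternant formula for Schur polynomials and observe that the exponent set $\{0, 1, \ldots, n\} \setminus \{n-i\}$ corresponds to the column partition $\lambda = (1^i)$, so that $V_i/V_0 = s_{(1^i)}(y_1, \ldots, y_n) = e_i(y_1, \ldots, y_n)$; this is conceptually slicker but imports more background machinery than the direct cofactor argument sketched above.
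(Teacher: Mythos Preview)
Your argument is correct: the augmented Vandermonde trick, expanding $\det A(t)$ along the $t$-column and matching coefficients with $V_0\prod_j(t-y_j)$, cleanly yields $V_i/V_0 = e_i$, and your sign bookkeeping checks out against the paper's convention that $V_i$ deletes row $r = n-i$.

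There is nothing to compare against in the paper itself: the authors do not supply a proof but simply cite \cite[Theorem I]{Heineman}. Your cofactor-expansion argument (or the Schur-polynomial alternative you mention, via $s_{(1^i)} = e_i$) is exactly the kind of short self-contained justification one would expect Heineman's original to contain, so in effect you have filled in what the paper leaves as a black-box citation.
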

In particular, since the $y_j$ are the roots of the polynomial $\sum_{i=0}^{N/m} a_{mi}t^{mi}$, we can write 
\begin{equation}\label{Ecoefficient}
V_i = (-1)^i a_{N - mi}/a_N.
\end{equation}

Following \cite{Heineman}, for all $\ell, n \in \mathbb{N}$, define $D_\ell^n$ to be the $\ell \times \ell$ matrix given by the upper-left hand corner of the infinite matrix below:
$$
\left(
\begin{array}{cccccccccc}
V_1 & V_2 & V_3 & \cdots & V_n & 0 & 0 & \cdots & \cdots & \cdots \\
V_0 & V_1 & V_2 & \cdots & \cdots & V_n & 0 & 0 & \cdots & \cdots \\
0 & V_0 & V_1 & \cdots & \cdots & \cdots & V_n & 0 & 0 & \cdots \\
0 & 0 & V_0 & V_1 & \cdots & \cdots & \cdots & V_n & 0 & \cdots \\
\vdots & \vdots & \vdots & \ddots & \ddots & \ddots & \ddots & \cdots & \cdots & \cdots \\

\end{array}
\right)
$$

For a sequence of non-negative integers $l \geq t_1 \geq t_2 \geq \cdots \geq t_s$, we define $D_{\ell}^n\{t_1,t_2,\ldots,t_s\}$ to be the $\ell \times \ell$ matrix formed as follows: 

\begin{enumerate}
\item Start with $D_{\ell}^n$. 
\item Increase the subscripts of the $V_i$ in rows $1$ through $t_s$ by $s$, in rows $t_s + 1$ through $t_{s-1}$ by $s-1$, in rows $t_{s-1}+1$ through $t_{s-2}$ by $s-2$, etc.  
\end{enumerate}
Here $V_i$ is defined to be $0$ whenever $i > n$, and a zero that precedes a $V_0$ should be replaced by $V_0$ whenever the subscripts in its row are increased by $1$ (effectively, each increase by 1 ``moves the row to the left").    
\begin{example}\label{EVmatrix}
The matrix $D_4^{18}\{4, 4, 4, 3, 3, 3, 3, 2, 2, 2, 2, 1, 1, 1, 1\}$ is equal to 
$$\left( \begin{array}{cccc}
V_{16} & V_{17} & V_{18} & 0 \\
V_{11} & V_{12} & V_{13} & V_{14} \\
V_{6} & V_{7} & V_{8} & V_{9} \\
V_{1} & V_{2} & V_{3} & V_{4} 
\end{array}\right).$$
\end{example}

We write $|n_1, n_2, \ldots, n_\ell|$ for the determinant of the generalized Vandermonde matrix given by $(y_j^{n_i})$.
The main result of \cite{Heineman} implies the following proposition.
\begin{proposition}[{cf.\ \cite[Theorem IV]{Heineman}}]\label{Pheinemanmain}
For any $n \geq s \in \mathbb{N}$, and any natural numbers $t_1 \geq t_2 \geq \cdots \geq t_s$, the generalized Vandermonde $n \times n$ determinant $|t_1, t_2, \ldots, t_s, n-s-1, n-s-2, \ldots, 1, 0|$ is, up to sign and multiples of the principal Vandermondian $V_0$, equal to the determinant of 
$$D_{t_1 - n + 1}^n\{t_2 - n +2, t_3 - n + 3, \ldots, t_s - n + s\}.$$ \end{proposition}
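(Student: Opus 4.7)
The plan is to view the left-hand side as (up to sign) the classical Schur polynomial associated to the partition encoded by $(t_1,\ldots,t_s)$, and then invoke the dual Jacobi--Trudi identity to convert it into a determinant in the elementary symmetric functions, which by Proposition~\ref{Psymfunction} are exactly the ratios $V_i/V_0$.

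First I would normalize. Dividing the generalized Vandermonde determinant $|t_1,\ldots,t_s,n-s-1,\ldots,1,0|$ by the principal Vandermondian $V_0 = |n-1,n-2,\ldots,1,0|$ produces, by the bialternant formula, the Schur polynomial $s_\lambda(y_1,\ldots,y_n)$ attached to the partition
\[
\lambda = (t_1-n+1,\; t_2-n+2,\; \ldots,\; t_s-n+s,\; 0,\ldots,0),
\]
which has at most $s$ nonzero parts and largest part $\lambda_1 = t_1-n+1$. Next I would apply the dual Jacobi--Trudi identity,
\[
s_\lambda = \det\bigl(e_{\lambda'_i - i + j}\bigr)_{1 \leq i,j \leq \lambda_1},
\]
where $\lambda'$ denotes the conjugate partition and $e_k$ is the $k$-th elementary symmetric function in $y_1,\ldots,y_n$. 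By Proposition~\ref{Psymfunction} we have $e_k = V_k/V_0$, so after clearing a factor of $V_0^{\lambda_1}$ we obtain a determinant of an $(t_1-n+1)\times(t_1-n+1)$ matrix whose entries are the $V_i$, which is the correct size matching $D^n_{t_1-n+1}$.

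The remaining (and main) obstacle is purely bookkeeping: matching the specific row-shift pattern prescribed by Heineman's notation $D^n_\ell\{t_2-n+2,\ldots,t_s-n+s\}$ with the shifts $\lambda'_i - i + j$ produced by dual Jacobi--Trudi. To do this I would compute $\lambda'$ explicitly: $\lambda'_j = \#\{i : t_i \geq n-i+j\}$, which organizes the ``jumps'' in the exponent sequence into the row-block structure of $D^n_\ell\{\ldots\}$. The convention that a leading $0$ is replaced by $V_0$ when a row is shifted left corresponds to the dual Jacobi--Trudi convention $e_0 = 1$, and the convention $V_i = 0$ for $i > n$ corresponds to $e_k = 0$ for $k > n$. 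A careful row-by-row translation then shows that the matrix produced by dual Jacobi--Trudi is exactly $D^n_{t_1-n+1}\{t_2-n+2,\ldots,t_s-n+s\}$, up to the sign introduced by normalizing the bialternant formula.

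As a sanity check I would verify the claim against Example~\ref{EVmatrix}, where $n=18$, $s=15$, and $(t_1,\ldots,t_s)=(19,20,21,16,17,18,19,13,14,15,16,10,11,12,13)$; the conjugate partition should then have length $t_1-n+1=4$, which agrees with the $4\times 4$ size of the displayed matrix. If desired, the proposition could also be proved directly by induction on $s$, expanding $|t_1,\ldots,t_s,n-s-1,\ldots,0|$ along its first row and matching the recursion with the Laplace expansion of $D^n_\ell\{\ldots\}$ along its top row; this was Heineman's original approach, but the Schur/Jacobi--Trudi route is cleaner and makes the combinatorics of the row-shifts transparent.
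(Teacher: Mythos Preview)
The paper does not actually prove this proposition: it is stated with attribution ``cf.\ \cite[Theorem IV]{Heineman}'' and no argument is given. So there is nothing to compare against directly; your proposal supplies a proof where the paper simply cites one.

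Your approach via the bialternant formula and dual Jacobi--Trudi is correct and is the natural modern route. The identification $\lambda_i = t_i - n + i$ is right, and the bookkeeping does work out: in $D^n_\ell\{\tau_1,\ldots,\tau_{s-1}\}$ (with $\tau_k = t_{k+1}-n+k+1 = \lambda_{k+1}$) row $i$ has its subscripts increased by $\#\{k : \tau_k \geq i\} = \lambda'_i - 1$, so the $(i,j)$-entry becomes $V_{j-i+1+(\lambda'_i-1)} = V_{\lambda'_i - i + j}$, exactly matching dual Jacobi--Trudi after substituting $e_k = V_k/V_0$.

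Your sanity check, however, is miscomputed. In Example~\ref{EVmatrix} the shift list $\{4,4,4,3,3,3,3,2,2,2,2,1,1,1,1\}$ has $15$ entries, which correspond to $t_2,\ldots,t_s$, so $s=16$, not $15$. The correct exponent sequence (descending) is
\[
(t_1,\ldots,t_{16}) = (21,20,19,18,\,16,15,14,13,\,11,10,9,8,\,6,5,4,3),
\]
followed by $n-s-1 = 1$ and $0$; your displayed list is neither decreasing nor consistent with $t_1 - n + 1 = 4$. This is only a slip in the illustration and does not affect the validity of the argument.
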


\begin{proposition}\label{PMtoB}
Let $r_1, r_2, \ldots, r_{N/m}$ be the values of $r$ in the matrix $M'$ from (\ref{Emprime}) in ascending order.  Assume $\tilde{u} + 1 > m$.  Then the generalized Vandermonde determinant $|r_n, r_{n-1}, \ldots, r_1|$ is, up to sign and multiples of the principal Vandermondian $V_0$, equal to the determinant of 
\begin{equation}\label{ED} D_{(\tilde{u}-m+1)/m}^{N/m}\left\{\underbrace{\frac{\tilde{u}-m+1}{m}, \ldots, \frac{\tilde{u}-m+1}{m}}_{p-2-\epsilon \text{ times}},\underbrace{\frac{\tilde{u}-2m+1}{m}, \ldots, \frac{\tilde{u}-2m+1}{m}}_{p-1 \text{ times}},\ldots, \underbrace{2, \ldots, 2}_{p-1 \text{ times}},\underbrace{1, \ldots, 1}_{p-1 \text{ times}}\right\}.
\end{equation}
\end{proposition}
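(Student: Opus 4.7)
The plan is to apply the Heineman formula (Proposition \ref{Pheinemanmain}) directly to the list $(r_n, r_{n-1}, \ldots, r_1)$, where $n = N/m$. This requires first recasting the list in the shape $(t_1, \ldots, t_s, n-s-1, \ldots, 1, 0)$ by identifying the maximal initial segment of consecutive integers $r_1 < r_2 < \cdots$ starting at $0$ (which becomes the trivial tail upon reversal), and then reading off the nontrivial entries $t_j = r_{n-j+1}$.

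The key combinatorial input is the histogram of the values taken by $r_i - (i-1)$. Write $J := (\tilde{u}-m+1)/m$, which is a positive integer by the hypothesis $\tilde{u}+1 > m$ and the congruence $\tilde{u} \equiv -1 \pmod m$. Combining Remark \ref{Rjumps} with Lemma \ref{Llargestindex}, the quantity $r_i - (i-1)$ equals $J$ for exactly $p-1-\epsilon$ indices $i$ and equals each $j \in \{1, \ldots, J-1\}$ for exactly $p-1$ indices; a routine count using $N = (p-1)\tilde{u}-\epsilon m$ then forces the remaining $(p-1)(m-1)/m$ indices to yield jump $0$, i.e.\ $r_i = i-1$. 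Since the function $i \mapsto r_i - (i-1)$ is non-decreasing, these zero-jump indices form the initial block $i = 1, \ldots, (p-1)(m-1)/m$, so in Heineman's format $n - s = (p-1)(m-1)/m$ and we may take $t_j = r_{n-j+1}$ for $j = 1, \ldots, s$.

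It remains to read off the parameters of the $D$-matrix. Lemma \ref{Llargestindex}(i) gives $t_1 - n + 1 = r_n - (n-1) = J$, which matches the subscript in (\ref{ED}), while the superscript $N/m$ is just $n$. For $j \geq 2$ the identity $t_j - n + j = r_{n-j+1} - (n-j)$, combined with the histogram read from $i = n$ downward, shows that the multiset $\{t_j - n + j : 2 \leq j \leq s\}$ consists of $J$ with multiplicity $p-2-\epsilon$ (one fewer than the total $p-1-\epsilon$, since $j=1$ is omitted) followed by each of $J-1, J-2, \ldots, 1$ with multiplicity $p-1$. Substituting into Proposition \ref{Pheinemanmain} then produces exactly the matrix displayed in (\ref{ED}). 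The only real obstacle is the bookkeeping: carefully translating multiplicities from $i$-indices to $j$-indices and aligning Heineman's conventions with the ordering of our $r_i$'s; once that is in place, the conclusion is mechanical.
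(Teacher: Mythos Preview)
Your proof is correct and follows essentially the same approach as the paper's: both apply Proposition~\ref{Pheinemanmain} after setting $t_j = r_{n-j+1}$, then use Lemma~\ref{Llargestindex} and Remark~\ref{Rjumps} to compute the multiset $\{t_j - n + j\}_{j \geq 2}$ and read off the parameters of the $D$-matrix. You supply a bit more detail than the paper (explicitly counting the $(p-1)(m-1)/m$ zero-jump indices and isolating the consecutive tail), but the argument is the same.
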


\begin{proof}
By the construction of $M'$, the $r_i$
are the whole numbers from $0$ to $(N + \tilde{u} - m +1)/m$ in increasing order, skipping all $r$ such that $p \mid (mr + m - 1)$.  Let $n = N/m$.  In the language of Proposition~\ref{Pheinemanmain}, we have $r_{n+1-i} = t_i$, so $t_2 - n + 2 = r_{n-1} - (n-1) + 1$, $t_3 - n + 3 = r_{n-2} - (n-2) + 1$, and so forth.  By Lemma~\ref{Llargestindex}(ii) and Remark~\ref{Rjumps}, there are exactly $p-2-\epsilon$ values of $i$ (other than $i = n$) such that $r_i - i + 1 = (\tilde{u}+m-1)/m$, and $p-1$ values of $i$ such that $r_i - i + 1 = j$ for each $j$ between $1$ and $(\tilde{u}+m-1)/m - 1$. The proposition now follows from  Proposition~\ref{Pheinemanmain}, 
\end{proof}

\begin{corollary}[Isolatedness criterion]\label{Cisolated1}
Suppose $f = \sum_{i=0}^{N/m} a_{mi}t^{mi} \in k[t]$ is a solution to the differential data criterion for $(p, m, \tilde{u}, N)$. 
If $\tilde{u} + 1  = m$, then $f$ is automatically a solution to the isolated differential data criterion.

If $\tilde{u} + 1 > m$, then $f$ is a solution to the isolated differential data criterion if and only if the matrix $A$ is invertible, where $A$ is the square matrix of size $(\tilde{u}+1 - m)/m$ whose $ij$th entry is $$a_{(p-1)(m-1) - m(j-1) + pm(i-1)}.$$  Here, we set $a_i = 0$ for all $i < 0$ and $i > N$.  
\end{corollary}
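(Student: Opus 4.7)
My plan is to identify the asserted matrix $A$ with the Heineman matrix $\mathcal{D}$ from (\ref{ED}) up to left and right multiplication by invertible diagonal matrices, then chain this with Proposition~\ref{PMtoB} to reduce invertibility of $M$ (equivalently, the isolated differential data criterion) to invertibility of $A$.

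I will first dispatch the degenerate case $\tilde{u}+1=m$. Here $\ell := (\tilde{u}+1-m)/m = 0$, so the asserted matrix $A$ is empty and invertibility is vacuous. On the other side, a direct count shows the number of values of $r$ skipped in the definition of $M'$ always equals $\ell$, so when $\ell = 0$ no skipping occurs and $M'$ is a standard $N/m \times N/m$ Vandermonde matrix in the pairwise distinct $y_j := x_j^m$ (distinct because the $x_j$ are chosen from different $\mu_m$-orbits), hence invertible. So the isolated differential data criterion holds automatically in this case.

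For $\tilde{u}+1 > m$, I would first unwind the bumping recipe in the construction of $D_\ell^{N/m}\{\ldots\}$. The subscript-shift applied to row $r$ is $b(r) := (\ell-r)(p-1) + (p-2-\epsilon)$ (the number of $t$-indices at least $r$ in the decreasing list from (\ref{ED})), so the $(r,c)$ entry of $\mathcal{D}$ is $V_{s(r,c)}$ with
\begin{equation*}
s(r,c) \;:=\; c - r + 1 + b(r) \;=\; c - 1 + p(\ell+1-r) - \ell - \epsilon.
\end{equation*}
A short calculation shows that if $w(i,j) := (p-1)(m-1) - m(j-1) + pm(i-1)$ denotes the subscript appearing in $A_{ij}$, then $(N - w(i,j))/m = s(i,j)$. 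Combined with Equation~(\ref{Ecoefficient}), namely $V_k = (-1)^k a_{N-mk}/a_N$, this yields $A_{ij} = (-1)^{s(i,j)} a_N \mathcal{D}_{ij}$. Since $p$ is odd, $(-1)^{s(i,j)} = (-1)^{i+j+\epsilon}$, so the signs and the nonzero factor $a_N$ absorb into invertible diagonal matrices, giving $A = P \mathcal{D} Q$ for some invertible $P, Q$. Combined with Proposition~\ref{PMtoB} and $V_0 \neq 0$, this gives that $A$ is invertible if and only if $M$ is, as required.

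The main delicate point I expect is matching the ``boundary'' conventions. Entries of $\mathcal{D}$ at positions where $s(i,j) < 0$ (via Heineman's convention that a zero preceding $V_0$ becomes $V_0$ under a bump, effectively setting $V_k = 0$ for $k < 0$) or $s(i,j) > N/m$ must be treated as zero, and via the identity $(N-w(i,j))/m = s(i,j)$ these correspond exactly to entries of $A$ where $w(i,j) > N$ or $w(i,j) < 0$. The statement of the corollary sets these $a_w$ to zero, so the vanishing patterns agree. Once this matching is verified, the sign tracking and the diagonal-matrix reduction are routine.
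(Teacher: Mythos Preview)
Your proposal is correct and follows essentially the same route as the paper's proof: both reduce via Proposition~\ref{PMtoB} to the Heineman matrix from (\ref{ED}), compute its $(i,j)$ entry (you via the closed formula $s(i,j)=c-1+p(\ell+1-i)-\ell-\epsilon$, the paper by identifying the top-left entry and the row/column increments), and then use (\ref{Ecoefficient}) together with the sign/$a_N$ absorption to identify the result with $A$. Your handling of the degenerate case $\tilde{u}+1=m$ via the counting argument and of the boundary conventions $V_k=0$ for $k<0$ or $k>N/m$ is also in line with the paper's treatment.
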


\begin{proof}
Let $n = N/m$, and let $\epsilon$ be defined as in Lemma~\ref{Llargestindex}.
We have that $f$ is a solution to the isolated differential data criterion if and only if the matrix $M'$ from (\ref{Emprime}) is invertible.  In the language of generalized Vandermonde determinants, the determinant of $M'$ up to sign is $|r_n, r_{n - 1}, \ldots, r_1 = 0|,$ where the $r_i$ are as in Proposition~\ref{PMtoB}.
Note that, by Lemma~\ref{Llargestindex}(i), $r_n-n+1 = (\tilde{u}-m+1)/m$.  So if $\tilde{u} + 1 = m$, then $r_n = n-1$ and the matrix $M'$ is itself Vandermonde, and thus invertible.

Now consider $\tilde{u} + 1 > m$.  By Proposition~\ref{PMtoB}, $\det M'$ is, up to sign and multiples of the principal Vandermondian, equal to the determinant of $B$, where $B$ is the matrix from (\ref{ED}).
This means that we start with the matrix
$D^n_{(\tilde{u}-m+1)/m}$, move the indices in the first row up by 
$(p-1)(\tilde{u}-m+1)/m - 1 - \epsilon$, move the indices in the second row up by
$(p-1)(\tilde{u}-2m+1)/m - 1 - \epsilon$, move the indices in the third row up by
$(p-1)(\tilde{u}-3m+1)/m - 1 - \epsilon$, etc.  So the first row of $B$ begins with 
$V_{\alpha}$, where $\alpha = (p-1)(\tilde{u}-m+1)/m - \epsilon$, the second row 
begins with $V_{\alpha-p}$, the third begins with $V_{\alpha-2p}$, etc., and 
the indices on the respective $V_{(\cdot)}$ increase by $1$ as we move from 
left to right along any row.  We need to show that $B$ is invertible if and only if the matrix $A$ from the statement of the corollary is invertible. 

By (\ref{Ecoefficient}), $V_\alpha = (-1)^\alpha a_{N-m\alpha}/a_N$.  Since $N = (p-1)\tilde{u} 
- m\epsilon$, we have that for $\alpha = (p-1)(\tilde{u}-m+1)/m - \epsilon$, the 
entry $V_\alpha$ equals $(-1)^\alpha a_{(p-1)(m-1)}/a_N$.  So this is the top 
left entry of $B$.  Each step to the right increases the index of 
$V_{(\cdot)}$ by $1$, which, by (\ref{Ecoefficient}), decreases the 
corresponding index of $a_{(\cdot)}$ by $m$ and changes the sign.  Similarly, each step down decreases the index of $V_{(\cdot)}$ by $p$, which increases 
the corresponding index of $a_{(\cdot)}$ by $pm$ and changes the sign.  So 
the $ij$th entry of $B$ is $(-1)^{i+j+\alpha}(a_{(p-1)(m-1) - m(j-1) + 
pm(i-1)})/a_N$.  Multiplying the odd-numbered rows and columns of $B$ by $-1$, and then multiplying every entry by $(-1)^{\alpha}$, we obtain the matrix $B'$ whose $ij$th entry is $a_{(p-1)(m-1) - m(j-1) + 
pm(i-1)}/a_N$.  Clearly, $B$ is invertible if and only if $B'$ is.  Since $a_N B' = A$ and $a_N \neq 0$, we see that $B'$ is invertible if and only if $A$ is, and we are done.
\end{proof}

\begin{example}\label{Ematrixexample}
Suppose $(p,m,\tilde{u},N) = (5, 2, 9, 36)$.  A solution $f$ is isolated if the 
matrix $B := D_4^{18}\{4, 4, 4, 3, 3, 3, 3, 2, 2, 2, 2, 1, 1, 1, 1\}$ is invertible.  We saw in Example~\ref{EVmatrix} that
$$B = \left( \begin{array}{cccc}
V_{16} & V_{17} & V_{18} & 0 \\
V_{11} & V_{12} & V_{13} & V_{14} \\
V_{6} & V_{7} & V_{8} & V_{9} \\
V_{1} & V_{2} & V_{3} & V_{4} 
\end{array}\right).$$
By (\ref{Ecoefficient}), up to factors of $a_N = a_{36} \neq 0$, 
this matrix is equal to 
$$\left( \begin{array}{cccc}
a_4 & a_2 & a_0 & 0 \\
a_{14} & a_{12} & a_{10} & a_8 \\
a_{24} & a_{22} & a_{20} & a_{18} \\
a_{34} & a_{32} & a_{30} & a_{28},
\end{array}\right)$$
which is the matrix $A$ from Corollary~\ref{Cisolated1}.
\end{example}

\begin{remark}\label{Rindices}
Suppose $A$ is the matrix from Corollary~\ref{Cisolated1}.  Here are some observations about $\det A$ which could potentially be helpful for future computations concerning the isolated differential data criterion.
\begin{enumerate}[\upshape(i)]
\item Each term of $\det A$ is a monomial of degree $(\tilde{u}-m+1)/m$ in the $a_i$.  If the \emph{weight} of a monomial
$c\prod_{\ell = 1}^L a_{i_{\ell}}$ is defined to be $\sum_{\ell=1}^L i_\ell$, then the weight of each term of $\det A$ equals 
$$\sum_{\ell =1}^{(\tilde{u}-m+1)/m} \left((m-1)(p-1) + (\ell - 1)(p-1)m\right) = \frac{(p-1)(\tilde{u}-m+1)(\tilde{u}-1)}{2m}.$$  In Example~\ref{Ematrixexample}, the weight of each term is $64$.
\item The indices of the $a_i$ jump by $pm$ in every row and by $\tilde{u} - 2m + 1$ from the first entry in a row to the last one.  Thus if $\tilde{u} - 2m + 1 < pm$, no $a_i$ appears more than once in $A$. In order to prove Conjecture~\ref{Cmain} when the $p$-Sylow subgroup of $G$ has order $p^2$, one need only show the isolated differential data criterion holds for $(p, m, \tilde{u}, N)$ with $\tilde{u} < pm$ (Proposition~\ref{prop_best_bound}).  In particular, $\tilde{u} - 2m + 1 < pm$, so we may assume in this case that no term of $\det A$ has a repeated factor of $a_i$.  
\end{enumerate}

\end{remark}

\begin{example}
\label{ex32510isolated}
Recall from Example \ref{ex32510differential} that the only functions that satisfy differential data criterion are $2t^{10}+1$ and $2t^{10}+t^8+t^6+1$. Moreover, by Corollary \ref{Cisolated1}, the matrix $A$ associated to that quadruple is

$$A =  \left( \begin{array}{cc}
a_2 & a_0 \\
a_8 & a_6 \\
\end{array}\right) = \left( \begin{array}{cc}
0 & 1\\
a_8 & a_6 \\
\end{array}\right).$$

\noindent The determinant of the matrix is $-a_8$. Hence, only $2t^{10}+t^8+t^6+1$ verifies the isolated differential data criterion. 
\end{example}

\section{Gr{\"o}bner basis computation setup}\label{Sgrobner}
Recall that a quadruple $(p,m,\tilde{u},N)$ satisfies the isolated differential data criterion if there exist elements $a_i\in k,\;0\leq i\leq N$ that satisfy the conditions of Proposition \ref{system} and Corollary \ref{Cisolated1}. Using the notation of the former, we define an ideal $I=(g_0,g_m, g_{2m}, \ldots,g_N) \in k[a_0,\ldots,a_N]$ where
\begin{equation}
\label{eqngi}
\begin{cases}
g_0:=a_0+u^{-1}\\
g_{m  i}:=u a_{m i}^p- c_{p m i-\tilde{u}(p-1)},\text{ for } \lceil \tilde{u}-\tilde{u}/p\rceil \leq m i \leq N\\
g_{m  i}:=a_{m i}, \text{ otherwise}\\
\end{cases}
\end{equation}
while, in the notation of the latter, we write $h$ for the determinant of the matrix with entries $a_{(p-1)(m-1) - m(j-1) + pm(i-1)}$ from Corollary \ref{Cisolated1}, which is also a polynomial in $k[a_0,\ldots,a_N]$. 
\begin{remark}\label{algorithm-1}
Proposition \ref{system} and Corollary \ref{Cisolated1} can be combined to provide a method to verify whether a given quadruple $(p,m,\tilde{u},N)$ satisfies the isolated differential data criterion: first we find a $k$-rational point $(a_0,\ldots,a_N)$ in the affine variety $V(I)\subseteq \mathbb{A}^N$ with $a_N\neq 0$, then we compute the determinant $h(a_0,\ldots,a_N)$ and verify it is non-zero. This approach was illustrated in Example \ref{ex32510differential} and Example \ref{ex32510isolated}, where we verified that the quadruple $(3,2,5,10)$ satisfies the isolated differential data criterion.
\end{remark}
The obvious disadvantage of the above method is that it seems difficult to find an explicit formula for such a $k$-rational point for a general quadruple $(p,m,\tilde{u},N)$. Our alternative computational approach can be summarized as follows:
\begin{proposition}\label{grobner-criterion}
There exists a solution to the isolated differential data criterion for a quadruple $(p, m, \tilde{u}, N)$ if and only if the ideal
\[J=(g_0, g_m, g_{2m}, \ldots, g_N, 1-y a_N h)\subseteq k[a_0, a_m, a_{2m}, \ldots, a_N,y]\] is not the unit ideal.  This is equivalent to $1$ not being in the reduced Gr{\"o}bner basis of $J$ with respect to any term order.
\end{proposition}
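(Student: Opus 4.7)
The approach is a textbook application of Hilbert's Nullstellensatz combined with the Rabinowitsch trick. The plan is to identify solutions to the isolated differential data criterion with $k$-rational points of the affine variety $V(J) \subseteq \mathbb{A}^{N/m+2}$, and then translate nonemptiness of this variety into the ideal-theoretic statement via the Nullstellensatz (using that $k$ is algebraically closed).

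First I would recall how the $g_{mi}$ and $h$ capture the relevant conditions. By Proposition~\ref{system}, a polynomial $f(t) = \sum_{i=0}^{N/m} a_{mi} t^{mi} \in k[t^m]$ satisfies the differential data criterion for $(p,m,\tilde{u},N)$ if and only if its coefficient tuple $(a_0,a_m,\ldots,a_N)$ is a zero of all the $g_{mi}$ (as defined in Equation~(\ref{eqngi})) while additionally satisfying $a_N \neq 0$; indeed, the polynomials $g_{mi}$ are precisely the defining equations of the system~(\ref{d.d.c.6}), with the normalization $a_0 = -u^{-1}$ encoded by $g_0$. By Corollary~\ref{Cisolated1}, assuming $\tilde{u}+1 > m$ (the case $\tilde{u}+1 = m$ being automatic and easy to handle separately), such an $f$ is a solution to the \emph{isolated} differential data criterion if and only if, in addition, $h(a_0,\ldots,a_N) \neq 0$, where $h$ is the determinant of the matrix $A$. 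Thus solutions are in bijection with $k$-rational points of
\[
V(g_0, g_m, \ldots, g_N) \cap \{a_N h \neq 0\} \subseteq \mathbb{A}^{N/m+1}.
\]

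Next I would apply the Rabinowitsch trick: introducing the auxiliary variable $y$, the condition $a_N h \neq 0$ at a point $(a_0,\ldots,a_N)$ is equivalent to the existence of some $y \in k$ with $1 - y a_N h = 0$. Hence the existence of a $k$-point of the above open set is equivalent to the existence of a $k$-point of $V(J) \subseteq \mathbb{A}^{N/m+2}$, where $J$ is the ideal in the statement. Since $k$ is algebraically closed, Hilbert's weak Nullstellensatz then yields $V(J) \neq \emptyset$ if and only if $J$ is not the unit ideal of $k[a_0, a_m, \ldots, a_N, y]$.

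Finally, the Gröbner basis reformulation is purely formal: $J$ is the unit ideal precisely when $1 \in J$, and by the defining property of the reduced Gröbner basis (which is unique for any fixed term order), this happens exactly when $\{1\}$ is the reduced Gröbner basis of $J$ — equivalently, when $1$ appears in the reduced Gröbner basis with respect to \emph{any} term order. There is no serious obstacle in this argument; the only point requiring care is the bookkeeping at the start to confirm that the solution set to the isolated differential data criterion really is cut out by $\{g_{mi}\}$ together with the open conditions $a_N \neq 0$ and $h \neq 0$, which is just a matter of assembling Proposition~\ref{system} and Corollary~\ref{Cisolated1}.
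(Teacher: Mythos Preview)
Your proposal is correct and follows essentially the same approach as the paper: both assemble Proposition~\ref{system} and Corollary~\ref{Cisolated1} to identify isolated solutions with $k$-points of $V(g_0,\ldots,g_N)\cap\{a_N h\neq 0\}$, then apply the Rabinowitsch trick and the weak Nullstellensatz over the algebraically closed field $k$. Your phrasing (introducing $y$ directly and invoking nonemptiness of $V(J)$) is in fact slightly cleaner than the paper's, which passes through radical membership before citing the Rabinowitsch-style corollary.
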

\begin{proof}

By Proposition~\ref{system}, the quadruple $(p, m, \tilde{u}, N)$ satisfies the differential data criterion if and only if there exists a solution to the system (\ref{d.d.c.6}), i.e., if $g_0 = g_m = \cdots = g_N = 0$ and $a_N \neq 0$.  Furthermore, such a solution is isolated if and only if the matrix $A$ of  Corollary \ref{Cisolated1} is invertible, i.e. if and only if $h\neq 0$.  As $k$ is algebraically closed, it follows from Hilbert's Nullstellensatz that this happens if and only if neither $a_N$ nor $h$ lies in the radical of $(g_0, g_m, \ldots, g_N)$.  By \cite[\S 15, Corollary 35]{MR2286236}, this is equivalent to $(g_0, g_m, \ldots, g_N, 1 - ya_Nh)$ not being the unit ideal in $k[a_0, \ldots, a_N, y]$.
\end{proof}

\begin{example}
Consider the quadruple $(3,2,5,10)$ of Example \ref{ex32510differential} and Example \ref{ex32510isolated}. To apply Proposition \ref{grobner-criterion}, we consider the ideal
\[J=(a_0-1, 2a_6^3+a_8, 2a_8^3+a_6a_8, 2a_{10}^3-a_{10}^2, a_2, a_4, 1-ya_{10}a_8)\subseteq k[a_0,a_2,a_4,a_6,a_8,a_{10}],\]
then compute the Gr{\"o}bner Basis
\[\mathcal{G}=\left\{y^3 - a_{6}, a_{6}^2 - y, a_{6}a_{8} - y^2, a_{8}^2 - a_{6}, a_{6}y - a_{8}, a_{8}y - 1, a_{10} + 1\right\}\]
corresponding to the (degrevlex) monomial ordering $y>a_{10}> \ldots > a_0$, and finally verify that $1\notin\mathcal{G}$.
\end{example}

\begin{remark}
Ignoring the line $a_N \neq 0$, the system of equations (\ref{d.d.c.6}) is a system of $N/m + 1$ equations in the $N/m +1$ variables $a_0, a_m, a_{2m}, \ldots, a_N$ over the algebraically closed field $k$.  It is thus reasonable to expect a solution.  In fact, the solution space, if it exists, is always zero-dimensional.  This fact is not used in the sequel, so we only sketch the proof: If there were a positive-dimensional solution space, then the solution space of the corresponding homogenized system (say using a variable $x$) would non-trivially intersect the hyperplane at infinity given by $x=0$.  Since the $c_j$ all have degree $p-1$ in the $a_i$ by Remark~\ref{rmk_multi_coeff_homogen}, one sees that $x = 0$ would imply that $a_i = 0$ for all $i$, a contradiction.
\end{remark}

\section{Computational Results}\label{Sresults}

In this section we prove that $D_{25}$ and $D_{27}$ are local Oort groups.
\subsection{\texorpdfstring{$D_{25}$}{D25}}
 Since $D_{25}\cong \ints/5^2 \rtimes \ints/2$, we have that $p=5,\;m=2,\;n=2$ and so, by Proposition  \ref{prop_best_bound}, it suffices to verify that the isolated differential data criterion is satisfied for the quadruples $(5,2,\tilde{u},N)$ where $\tilde{u}< 10,\;\tilde{u}\equiv -1\mod 2,\;5\nmid \tilde{u}$ and $N\in\left\{4\tilde{u}, 4\tilde{u}-2  \right\}$. We thus have that $D_{25}$ is local Oort group if the isolated differential data criterion is satisfied for the quadruples $(5,2,\tilde{u},N)$ where
\begin{equation}\label{uN-D25}
(\tilde{u},N)\in \left\{(1,2), (1,4), (3,10), (3,12), (7,26), (7,28), (9,34), (9,36)\right\}.
\end{equation}

To verify the isolated differential data criterion for the above quadruples we use Remark \ref{algorithm-1}: for each pair $(\tilde{u},N)$ in (\ref{uN-D25}) we consider the system of equations
\begin{equation*}
\begin{cases}
a_0=-u^{-1}\\
u a_{2 i}^5 = c_{10 i-4\tilde{u}}, \text{ for } \lceil \tilde{u}-\tilde{u}/5\rceil \leq 2 i \leq N\\
a_{2 i}=0, \text{ otherwise},
\end{cases}
\end{equation*}
where the polynomials $c_{10 i-4\tilde{u}}$ are defined in (\ref{eqncj}). We note that in all cases $\tilde{u}$ is not divisible by $p$, so $\tilde{u}=u$. The small size of the input for this case allows us to explicitly solve the system.\footnote{The solutions are calculated by hand from the Gr\"{o}bner basis of the system, which is calculated using the program \texttt{sage\_GB\_calculation\_according\_to\_paper.txt}.
They can be checked using the program \texttt{sage\_poly\_checking\_validity\_and\_isolatedness.txt}.  Both programs are bundled with this paper on the arxiv (\texttt{arxiv:1912.12797}).} 
For each solution obtained, we compute the determinant of the square matrix of size $(\tilde{u}-1)/2$ whose $ij$th entry is $a_{4 - 2(j-1) + 10(i-1)}$, as in Corollary~\ref{Cisolated1}. In the table below we indicate one isolated solution per pair $(\tilde{u},N)$, noting that, in some cases, we have found more than one. (In the table below, $\alpha$ satisfies $\alpha^2=3$ in $\ol{\mathbb{F}}_5$.)

\begin{center}
\begin{tabular}{|l|l|}
  \hline
  $(\tilde{u},N)$ & Solution to the Isolated Differential Data Criterion\\ \hline \hline
  (1,2)& $t^2+4$  \\ \hline
  (1,4)& $t^4+4$ \\ \hline
  (3,10) & $2t^{10}+3t^8+t^4+3$ \\ \hline
  (3,12)& $2t^{12}+t^8+4t^4+3$ \\ \hline
  (7,26)& $2t^{26}+2t^{24}+4t^{20}+2t^{16}+t^{12}+t^8+2$ \\ \hline
  (7,28) & $3t^{28}+t^{26}+2t^{24}+3t^{16}+2t^{12}+2t^8+2$ \\ \hline
  (9,34) & $4t^{34}+t^{32}+3t^{30}+2t^{28}+4t^{26}+4t^{22}+t^{18}+3t^{16}+2t^{14}+3t^{12}+2t^{10}+1$ \\ \hline
  (9,36) &$4t^{36}+(\alpha-1)t^{32}+(3\alpha+1)t^{28}+2t^{24}+4t^{20}+3\alpha t^{16}+t^{12}+1$ \\ \hline
\end{tabular}
\end{center}

\subsection{\texorpdfstring{$D_{27}$}{D27}}
Since $D_{27}\cong \ints/3^3 \rtimes \ints/2$, we have that $p=3,\;m=2,\;n=3$ and so, by Proposition \ref{prop_best_bound}, it suffices to verify that the isolated differential data criterion is satisfied for the quadruples $(3,2,\tilde{u},N)$ where $\tilde{u} \leq 20,\;\tilde{u}\equiv -1\mod 2,\;9\nmid \tilde{u}$ and $N\in\left\{2\tilde{u}, 2\tilde{u}-2  \right\}$. We thus have that $D_{27}$ is local Oort group if the isolated differential data criterion is satisfied for the quadruples $(3,2,\tilde{u},N)$ where
\begin{multline}\label{uN-D27}
(\tilde{u},N)\in \{(3,4), (3,6), (5,8), (5,10), (7,12), (7,14),\\
(15,28), (15,30),(17,32),(17,34),(19,36),(19,38)\}.
\end{multline}

To verify the isolated differential data criterion for the above quadruples we use Remark \ref{algorithm-1}: for each pair $(\tilde{u},N)$ in (\ref{uN-D27}) we consider the system of equations
\begin{equation*}
\begin{cases}
a_0=-u^{-1}\\
u a_{2 i}^3 = c_{6 i-2\tilde{u}}, \text{ for } \lceil \tilde{u}-\tilde{u}/3\rceil \leq 2 i \leq N\\
a_{2 i}=0, \text{ otherwise},
\end{cases}
\end{equation*}
where the polynomials $c_{6 i-2\tilde{u}}$ are defined in (\ref{eqncj}). We note that when $\tilde{u}$ is divisible by $p$, we have $u = \tilde{u}/3$.
The small size of the input for this case allows us to explicitly compute solutions except when $(\tilde{u}, N) = (17, 34)$ or $(19, 36)$.\footnote{Again, we use the program \texttt{sage\_GB\_calculation\_according\_to\_paper.txt} to compute the Gr\"{o}bner bases and check using \texttt{sage\_poly\_checking\_validity\_and\_isolatedness.txt}.} 
  In all but these two cases, we compute the determinant of the square matrix of size $(\tilde{u}-1)/2$ whose $ij$th entry is $a_{2 - 2(j-1) + 6(i-1)}$, as in Corollary~\ref{Cisolated1}. As with $D_{25}$, we indicate one solution to the isolated differential criterion per pair in the table below (in which $\beta$ satisfies $\beta^2=2$).

For the cases of $(17,34)$ and $(19,36)$, we instead use the criterion of Proposition \ref{grobner-criterion} to verify that an isolated solution exists.\footnote{This is verfied by the program \texttt{sage\_existence\_iso\_sol\_check.txt}, also available at \texttt{arxiv:1912.12797}.}
\\

\begin{center}
\begin{tabular}{|l|l|}
  \hline
  $(\tilde{u},N)$ & Solution to the Isolated Differential Data Criterion\\ \hline \hline
(3,4)& $\beta t^4+t^2+2$  \\ \hline
(3,6) & $t^6+2t^4+t^2+2$ \\ \hline
(5,8) & $t^8+t^6+1$ \\ \hline
(5,10)& $2t^{10}+t^8+t^6+1$ \\ \hline
(7,12)& $\beta t^{12}+t^{10}+t^8+2$ \\ \hline
(7,14)& $t^{14}+t^{12}+t^{10}+t^8+2$ \\ \hline
(15,28)& $\beta t^{28}+2t^{26}+2t^{24}+2t^{18}+2t^{16}+2t^{10}+1$ \\ \hline
(15,30)&$2t^{30}+2t^{28}+t^{24}+t^{20}+t^{18}+t^{16}+2t^{10}+1$ \\ \hline
(17,32) &$t^{32}+t^{30}+t^{28}+t^{26}+t^{24}+t^{22}+t^{20}+t^{18}+1$ \\ \hline
(17,34)& Solution exists due to Proposition \ref{grobner-criterion} \\ \hline
(19,36)& Solution exists due to Proposition \ref{grobner-criterion} \\ \hline
(19,38) &$t^{38}+t^{36}+t^{34}+t^{32}+t^{30}+t^{28}+t^{26}+t^{24}+t^{22}+t^{20}+2$ \\ \hline
\end{tabular}
\end{center}

\bibliographystyle{alpha}
\bibliography{mybib}

\end{document}